\newtheorem{thm}{Theorem}[section]
\newtheorem{lem}[thm]{Lemma}
\newtheorem{prop}[thm]{Proposition}
\newtheorem{defn}[thm]{Definition}
\newtheorem{exam}[thm]{Example}
\newtheorem{def-prop}[thm]{Definition-Proposition}
\newcommand{\FF}{\mathbb{F}} 
\newcommand{\FZ}{\mathbb{Z}}  
\newcommand{\FC}{\mathbb{C}}  
\newcommand{\FR}{\mathbb{R}}  
\newcommand{\FQ}{\mathbb{Q}}  
\newcommand{\FP}{\mathbb{P}}
\newcommand{\fa}{\mathfrak{a}}
\newcommand{\fb}{\mathfrak{b}}
\DeclareMathOperator{\Td}{Todd}
\DeclareMathOperator{\GL}{GL}
\DeclareMathOperator{\SL}{SL}
\DeclareMathOperator{\bbe}{\mathbf{e}}
\newcommand{\sm}{\begin{pmatrix}}
\newcommand{\esm}{\end{pmatrix}}
\newcommand{\td}{\Td} 
\title{Higher Hickerson formula}
\author{Jungyun Lee}
\author{Byungheup Jun}
\author{Hi-joon Chae}
\email{lee9311@ewha.ac.kr}
\address{Department of Mathematics, Ewha Womans University,
52 Ewhayeodae-gil, Seodaemun-gu, Seoul 120-750, Republic of Korea}
\thanks{J.L. was supported by the Basic Science Research Program through the National Research Foundation of Korea(NRF) funded by the Ministry of Education(NRF-2011-0023688) and (NRF-2009-0093827).}
\email{bhjun@unist.ac.kr} 
\address{Department of Mathematical Sciences, UNIST,  
UNIST-gil 50, Ulsan 689-798, Republic of Korea}
\thanks{B.J. was supported by NRF-2015R1D1A1A09059083.}
\email{hchae@hongik.ac.kr}
\address{Department of Mathematics Education, Hongik University, 
Seoul 121-791, Republic of Korea}
\thanks{H.C. was supported by 2014 Hongik University Research Fund.}
\date{June 7, 2016}
\begin{document}

\begin{abstract}
In \cite{Hi}, Hickerson made an explicit formula for Dedekind sums $s(p,q)$ in terms of the continued fraction of $p/q$.
We develop analogous formula for generalized Dedekind sums $s_{i,j}(p,q)$
defined in association with the $x^{i}y^{j}$-coefficient of the Todd power series of the lattice cone in $\FR^2$ generated by 
$(1,0)$ and $(p,q)$.
The formula generalizes Hickerson's original one and reduces to Hickerson's for $i=j=1$. 
In the formula, generalized Dedekind sums are divided into two parts: the integral $s^I_{ij}(p,q)$ and the fractional $s^R_{ij}(p,q)$.
We apply the formula to Siegel's formula for partial zeta values at a negative integer and obtain a new expression which 
involves only $s^I_{ij}(p,q)$  the integral part of generalized Dedekind sums.  This formula directly
generalize Meyer's formula for the special value at $0$. 
Using our formula, we present the table of the partial zeta value at $s=-1$ and $-2$ in more explicit form. 
Finally, we present another application on the equidistribution property of the fractional parts of the graph $\Big{(}\frac{p}{q},R_{i+j}q^{i+j-2} s_{ij}(p,q)\Big{)}$ for a certain integer $R_{i+j}$ depending on $i+j$.
\end{abstract}
\maketitle

\tableofcontents

\section{Introduction}
In \cite{Hi}, Hickerson obtained an explicit formula for Dedekind sum $s(p,q)$ in terms of the elements of the continued fraction of $p/q$, where the Dedekind sum $s(p,q)$ is defined as 
%
\begin{equation*}
s(p,q):=\sum_{k=0}^{q-1}\big(\big(\frac{k}{q}\big)\big)\big(\big(\frac{pk}{q}\big)\big).
\end{equation*}
Here $((-))$ denotes the sawtooth function(ie. for $x\in \FR$, $((x)) = x- [x] -\frac12$ if $x\not\in \FZ$ and $((x))=0$ otherwise).
Hickerson's formula is written as follows:
\begin{equation}\label{hik_formul}
12s(p,q)=\begin{cases}
  \frac{p-q_{n-1}}{q}+\sum_{i=1}^{n}(-1)^{i+1}a_i    & \text{if $n$ is even }, \\
     \frac{p+q_{n-1}}{q}+\sum_{i=1}^{n}(-1)^{i+1}a_i -3   & \text{if $n$ is odd}.
\end{cases}
\end{equation}
where $p_k/q_k= [a_0, \ldots, a_{k}]$ be the $k$-th convergent of $p/q$ and $a_i$ are the terms of the continued fraction of $p/q$:
\begin{equation}
\frac{p}{q} = [a_0,\ldots,a_n]  = a_0 + \cfrac{1}{a_1+\cfrac{1}{\ldots + \cfrac{1}{a_n}}}
\end{equation}
The idea of the proof is successive application of the celebrated reciprocity formula of Dedekind sums: 
$$
s(p,q) + s(q,p) = \frac{1}{12}(p/q + q/p + 1/pq) -1/4
$$
Hickerson used the formula to show the density of 
the graph of Dedekind sums $\{(p/q, s(p,q))\in \FR^2| (p,q)=1, q>0\}$ in \textit{loc.cit.}. It implies a conjecture of Rademacher saying that $s(p,q)$ is dense in $\FR$(\cite{RG}).

In Hickerson's formula \eqref{hik_formul}, the Dedekind sum is divided into  
 integral and  fractional parts. 
Both  are interesting in opposite directions. 
Integral part appears in
the formula for $\zeta$ or $L$-values of real quadratic fields.
Namely, Meyer's formula for partial zeta value of an ideal $\fa$ of a real quadratic field at $s=0$ is as follows
%
\begin{equation*}
\zeta(\fa,0)=\begin{cases}
 \frac{1}{12}\sum_{i=1}^{n}(-1)^{i+1}a_i    & \text{if $n$ is even }, \\
     \frac{1}{12}\sum_{i=1}^{n}(-1)^{i+1}a_i -3   & \text{if $n$ is odd}.
\end{cases}
\end{equation*}
where $a_i$ are terms of the continued fraction of a rational number $p/q$ in relation with  $\fa$ (cf. \cite{Me}, \cite{Van}, \cite{J-L2}, \cite{J-L4}).
We are not going to give exact description of the numbers appearing here as well as the definition of the partial zeta function but refer to Sec.\ref{partial_sec} of this article. 
The special value is  
exactly the integer part of $12s(p,q)$ as appeared in \eqref{hik_formul} but a universal denominator $12$ independent of $\fa$.
The occurrence of $12$ is explained in \cite{J-L3}.
The fractional part (mod 1) is given as a Laurent polynomial in $p$ divided by $q$ and is necessary in relating the Kloosterman sum to Dedekind sums:
\begin{equation*}
\sum_{\substack{0<p<q\\ gcd(p,q)=1}}\exp\left(2\pi i \left(12 s\left(p,q\right)\right)\right) = 
\sum_{p\in (\FZ/q\FZ)^*} \exp\left(2\pi i \frac{p+ p^{-1}}{q}\right)
\end{equation*}
Then the Weil bound for Kloosterman sum implies the Weyl equidistribution of the fractional part of $12 s(p,q)$ in the unit interval.

For partial zeta values at strictly negative integers, there is another formula due to Siegel(\cite{Siegel}, see also \cite{C-S}), but one needs generalized Dededkind sums:
\begin{equation}
s_{ij}(p,q):=\sum_{k=0}^{q-1}\bar{B}_{i}(\frac{k}{q})\bar{B}_{j}(\frac{pk}{q}).
\end{equation}
Here $\bar{B}_i(-)$ is the $i$-th periodic Bernoulli function. 
These sums $s_{ij}(p,q)$ vanish for odd $N=i+j$. 
For the Siegel formula, we refer the reader to Sec.\ref{partial_sec}. of this article.
The case one of $j=1$ is known as Apostol sums. These sums are studied by Apostol and Carlitz and appear in  modular transformation of  generalized Lambert series (cf. \cite{apostol}, \cite{apostol2}, \cite{ca1}, \cite{ca2}).


The goal of this paper is to obtain a higher degree analogue of the Hickerson's formula. 
It is an explicit formula for the generalized Dedekind sums using the 
terms of the continued fraction of $p/q$ as in Hickerson's. 
Furthermore, the formula should feature separation between the integral and the fractional part in the following way:
\begin{equation}
s_{i,j}(p,q)=\frac{1}{q^{i+j-2}}s_{i,j}^{I}(p,q)+\frac{1}{q^{i+j-1}}s_{i,j}^{R}(p,q).
\end{equation}
Here,  $s_{ij}^I$ and $s_{ij}^R$ denote the integral and the fractional part which are written in terms of the terms of the continued fraction of $p/q$ and  belong to $\frac{1}{R_{i+j}}\FZ$ where $R_{i+j}$ is an integer fixed by $N=i+j$ ($R_{i+j}$ is given explicitly in \eqref{Rij}).
Bearing in mind the two results aforementioned, which are obtained from the formulae of the integral and the fractional parts, we are going to present two applications in each direction. 

The main result of \cite{J-L} shows that 
$R_{i+j}s_{ij}^R(p,q)/q \pmod{1}$(thus same as $R_{i+1}q^{i+j-2} s_{ij}(p,q) \pmod{1}$) 
is equidistributed in the unit interval.
%
%
In \textit{loc. clit.},  explicit formula for $s_{i,j}^{R}(p,q)$ is given.
In this paper, it remains to find explicit formula of  $s_{i,j}^{I}(p,q)$, %
that will complement the previous result. 
For $i=j=1$, it is reduced to the case of the classical Dedekind sums as appeared in  Hickerson(\cite{Hi})
with $R_2=R_{i+j}=12$.
It is crucial ingredient to single out the integral part from the classical Dedekind sums in
the proof of Density of the graph of Dedekind sums in $\FR^2$. 
Analogous formula for the generalized Dedekind sums can be used in showing the equidistribution of the graph 
$\Big{(}\frac{p}{q},R_{i+j} q^{i+j-2} s_{ij}(p,q)\Big{)} \mod {\FZ\times \FZ}$. 
It will be an application of the main theorem and appear in Sec. \ref{last_sec}.
For classical Dedekind sums, one has much stronger  equidistribution for the fractional part by results of Myerson and Vardi(cf. \cite{Myerson}, \cite{Vardi}). One can take $R_{i+j}$ as arbitrary nonzero real.


%
When  $i,j\ne 1$, it is not so simple as in Hickerson's due to lack of nice reciprocity formula 
relating $s_{ij}(p,q)$ and $s_{ij}(q,p)$. 
If either $i$ or $j$ is $1$, the generalized Dedekind sums are studied by Apostol and Carlitz. 
There is a reciprocity formula found by  Apostol (cf. \cite{apostol}, \cite{apostol2}, \cite{ca1}, \cite{ca2}). 
In general as reciprocity, the best is the one involving other sums with same $N=i+j$ at the same time. 
We notice that the generalized Dedekind sums $s_{ij}(p,q)$ appear to be closely related to the coefficients of the Todd series  $\Td_{pq}(x_1,x_2)$ of a lattice cone generated by $e_1=(1,0)$ and $(p,q)$ (For the definition of Todd series, see Sec. \ref{sec_Todd}).
Additionally, the association of (normalized) Todd series $S_{\sigma}$ to a lattice cone $\sigma$ has additivity.
\begin{equation}
\begin{split}
\Phi:\sigma \mapsto S_\sigma(A_{\sigma}^{-1}(x,y)^{t}), \\
\Phi(\sigma(v_1,v_2)) + \Phi(\sigma(v_2,v_3)) = \Phi(\sigma(v_1,v_3)).
\end{split}
\end{equation}
Here, for two primitive lattice vector $v, w$, $\sigma(v,w)$ denotes the lattice cone generated by $v,w$.
In this context, the reciprocity formulae are all consequences of this additivity. 
The first quadrant cone of $\FR^2$ can be divided into two by putting a vector $(p,q)$ for $p,q$ relative prime and both positive. 
Then one can relate $\td_{pq}(x,y)$ and $\td_{qp}(x,y)$. 
Reading the coefficient of particular monomial, one can recover the reciprocity formula including other generalized Dedekind sums of the same weight(cf. \cite{Pom}, \cite{Solomon}). 

The strategy of this paper is to lift the reciprocity to the level of the Todd series.
As is well-known, the continued fraction of $p/q$ describes a decomposition of the cone given by $(p,q)$ into nonsingular cones. 
At the final step, we obtain the explicit formula for generalized Dedekind sums in terms of the continued fraction of $p/q$.


The first of our main result is the following explicit formula:
%

\begin{thm} \label{ma}
Suppose $e,f$ is a positive integer and $N=e+f$ is even.
Then we have 
\begin{equation*}
s_{e,f}(p,q)+\delta(e,f)B_{e} B_{f}=\frac{1}{q^{N-2}}s_{e,f}^I(p,q) + \frac{1}{q^{N-1}}s_{e,f}^R(p,q),
\end{equation*}
where $s_{e,f}^I(p,q)$ and $s_{e,f}^R(p,q)$ are given as:
\begin{equation*}
\begin{split} 
&s_{e,f}^I(p,q)=e!f! \sum_{\alpha+\beta=e} \Big[\sum_{k=-1}^{n-1}(-1)^{k+1}f_k(\alpha,\beta)+ 
\frac{B_N}{N!}\sum_{k=0}^{n-1}(-1)^ka_{k+1}g_k(\alpha,\beta)\Big{]}, \\
&s_{e,f}^R(p,q)= e!f!\frac{B_N}{N!}\left[\binom{N-1}{f}p^{f}+(-1)^{e(n-1)}\binom{N-1}{e}(q_{n-1})^{e}\right]
\end{split}
\end{equation*}
Here 
$$
D_k:=pq_k-qp_k,\quad  \text{for $k=0,1,\cdots, n-1$}
$$ and
\begin{equation*}
\begin{split}
&f_k(\alpha,\beta):=
\frac{q_k^{\beta}q_{k+1}^{\alpha}}{\alpha!\beta!}\sum_{i=\alpha}^{N-2-\beta} \frac{(-1)^i}{(N-2-i-\beta)!(i-\alpha)!}\frac{B_{i+1}}{(i+1)}\frac{B_{N-i-1}}{(N-i-1)}D_k^{N-2-i-\beta}D_{k+1}^{i-\alpha} \\
&g_k(\alpha,\beta):= \frac{q_{k-1}^{\beta}q_{k+1}^{\alpha}}{\alpha!\beta!}\sum_{i=\alpha}^{ N-2-\beta}\frac{(N-i-2)!}{(N-2-i-\beta)!}\frac{i!}{(i-\alpha)!}D_{k-1}^{N-i-2-\beta}D_{k+1}^{i-\alpha}
\end{split}
\end{equation*}
and
$\delta(i,j)=\begin{cases}1,& i=1 \text{ or }j=1,
\\0, &\text{ otherwise}\end{cases}$ and $B_i$ is the $i$-th Bernoulli number. 
\end{thm}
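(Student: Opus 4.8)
The plan is to realize $s_{e,f}(p,q)+\delta(e,f)B_eB_f$ as a single coefficient of the normalized Todd series $\Phi(\sigma)$ attached to the cone $\sigma=\sigma((1,0),(p,q))$, as set up in Sec.~\ref{sec_Todd}, and then to evaluate that coefficient by decomposing $\sigma$ into nonsingular cones along the convergents of $p/q$. Writing $v_k=(p_k,q_k)$ for the convergent vectors, with the standard conventions $v_{-1}=(1,0)$ and $v_n=(p,q)$, consecutive convergents satisfy $p_kq_{k+1}-p_{k+1}q_k=\pm1$, so each $\sigma(v_k,v_{k+1})$ is nonsingular. Applying the additivity $\Phi(\sigma(v_1,v_2))+\Phi(\sigma(v_2,v_3))=\Phi(\sigma(v_1,v_3))$ repeatedly telescopes the series of the big cone into a signed sum over the nonsingular pieces,
\begin{equation*}
\Phi(\sigma((1,0),(p,q)))=\sum_{k=-1}^{n-1}\Phi(\sigma(v_k,v_{k+1})),
\end{equation*}
where the orientation of the $k$-th piece is governed by $\det(v_k,v_{k+1})=\pm1$ and produces the factor $(-1)^{k+1}$ seen in the theorem.

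For each nonsingular cone the Todd series factors as a product $\td(u_1)\td(u_2)$ of two one–variable Todd factors, where $(u_1,u_2)^t$ are the coordinates dual to $(v_k,v_{k+1})$; concretely this is $S_{\sigma}(A_{\sigma}^{-1}(x,y)^t)$ with $A_\sigma=[\,v_k\mid v_{k+1}\,]$. To match the homogeneity built into the statement I would pass to the coordinate system adapted to $\sigma((1,0),(p,q))$ via the linear map sending $(1,0)\mapsto(1,0)$ and $(p,q)\mapsto(0,1)$; in these coordinates a convergent $(p_k,q_k)$ becomes $\big(-D_k/q,\,q_k/q\big)$ with $D_k=pq_k-qp_k$, which is exactly what introduces the variables $D_k,q_k$ and the denominators $q^{N-2},q^{N-1}$. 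Expanding $\td(u_1)\td(u_2)=\sum_{a,b}\frac{(-1)^aB_a}{a!}\frac{(-1)^bB_b}{b!}u_1^au_2^b$ and extracting the coefficient of $x^ey^f$ then yields, after the binomial expansion of the linear forms $u_1,u_2$ in $x,y$ (the source of the auxiliary summation over $\alpha+\beta=e$), precisely the products of Bernoulli numbers $\frac{B_{i+1}}{i+1}\frac{B_{N-i-1}}{N-i-1}$ weighting the monomials $D_k^{N-2-i-\beta}D_{k+1}^{i-\alpha}$; this is the content of $f_k(\alpha,\beta)$.

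The remaining work, and the main obstacle, is to isolate the two leftover contributions. First, the terms $g_k(\alpha,\beta)$ weighted by $a_{k+1}$ should be produced by rewriting the telescoped sum with the continued-fraction recurrences $q_{k+1}=a_{k+1}q_k+q_{k-1}$ and $D_{k+1}=a_{k+1}D_k+D_{k-1}$, which is what couples the indices $k-1$ and $k+1$ and pulls out the single top Bernoulli number $B_N$: the purely polynomial, edge-type part of the expansion carries no Bernoulli product, only the global factor $B_N/N!$. Second, the fractional part $s^R_{e,f}(p,q)$ must emerge as the boundary contribution of the telescoping at the two ends $k=-1$ and $k=n-1$, where the generators degenerate to $(1,0)$ and to $(p,q)$ itself; there $D_{-1}=-q$, $q_{-1}=0$, and $D_n=0$ force most terms to collapse, leaving exactly the two monomials $\binom{N-1}{f}p^f$ and $(-1)^{e(n-1)}\binom{N-1}{e}q_{n-1}^e$, the sign coming from the parity of $\det(v_{n-1},v_n)$. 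Throughout, the hypothesis that $N=e+f$ is even is used to kill the odd Bernoulli numbers, which is what makes the index ranges close up, while the correction $\delta(e,f)B_eB_f$ is precisely the discrepancy between the periodic Bernoulli functions $\bar B_e,\bar B_f$ defining $s_{e,f}$ and the Bernoulli polynomials produced by the Todd expansion, a discrepancy that is nonzero only when $e$ or $f$ equals $1$. The delicate point is the bookkeeping showing that these reorganized sums assemble into exactly the stated $f_k$, $g_k$, and boundary terms with the correct signs; I expect this rearrangement, rather than any single cone computation, to be where the real effort lies.
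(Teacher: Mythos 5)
Your proposal follows essentially the same route as the paper: identify $s_{e,f}(p,q)+\delta(e,f)B_eB_f$ with a Todd-series coefficient, decompose $\sigma((1,0),(p,q))$ into nonsingular cones along the convergents, telescope via the cocycle property, use the recurrence $M_{k-1}-M_{k+1}=a_{k+1}M_k$ (equivalently $q_{k+1}=a_{k+1}q_k+q_{k-1}$, $D_{k+1}=a_{k+1}D_k+D_{k-1}$) to convert the $B_0B_N$-type terms into the $g_k$ sums plus the boundary monomials that give $s^R_{e,f}$, and read off coefficients. All the structural ingredients match the paper's Sections 2--5, so the remaining work is exactly the bookkeeping you anticipate.
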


Using this formula, we present here two applications. 
First, we apply the explicit formula to the Siegel formula for the partial zeta value of an ideal of a real quadratic field at nonpositive integer and obtain a modified one.
Surprisingly, $s^R_{i,j}(p,q)$ in the formula cancels each other and 
the new formula does not involve $s^R_{i,j}(p,q)$ the fractional part of $s_{i,j}(p,q)$ at all.
As seen in Thm.\ref{ma}, it appears  as a polynomial in terms of the continued fraction of $p/q$ same as in Meyer's formula. 
Actually, restricting $i=j=1$, one can recover the formula of Meyer.

\begin{thm}[Higher Meyer formula]\label{higher_Meyer_formula}
Let $\fa = [\alpha,\beta]$ be an ideal of a real quadratic field and $\epsilon$ be the totally positive fundamental unit of $K$.
If the multiplication of $\epsilon^{-1}$ is identified with a matrix $\begin{pmatrix} p & q \\ r & s \end{pmatrix}\in \SL_2(\FZ)$ w.r.t. basis $\{ \alpha, \beta \}$,
then we have
\begin{equation}
\zeta(\fa,1-N)=\frac{(-1)^N}{q^{2N-2}}  \sum_{k=0}^{2N-2}(-1)^{k + 1} e_{N,  k} \frac{s^{I}_{k + 1,  2 N - k - 1}(p,q)}{(k + 1) (2 N - 1 - k)}
\end{equation}
where $e_{N,k}$ is given by $(z^2+(p+s)z+1)^{N-1}=\sum_{i=0}^{2N-2}e_{N,k}z^k$.
\end{thm}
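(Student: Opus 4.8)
The plan is to substitute the Higher Hickerson formula (Theorem \ref{ma}) into Siegel's formula for the partial zeta value and to watch the fractional parts cancel. First I would recall from Section \ref{partial_sec} that Siegel's formula writes $\zeta(\fa,1-N)$ as a fixed linear combination of the weight-$2N$ generalized Dedekind sums, namely $\zeta(\fa,1-N)=(-1)^N\sum_{k=0}^{2N-2}(-1)^{k+1}e_{N,k}\,s_{k+1,\,2N-k-1}(p,q)/\big((k+1)(2N-1-k)\big)$, the coefficients being governed by the characteristic polynomial $(z^2+(p+s)z+1)^{N-1}=\sum_k e_{N,k}z^k$ of $\epsilon^{-1}$, whose trace $p+s=\tr(\epsilon^{-1})$ records the period of the unit. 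Then each $s_{k+1,2N-k-1}(p,q)$ is replaced, via Theorem \ref{ma} applied with $e+f=2N$, by $q^{-(2N-2)}s^I_{k+1,2N-k-1}+q^{-(2N-1)}s^R_{k+1,2N-k-1}-\delta(k+1,2N-k-1)B_{k+1}B_{2N-k-1}$, and the whole argument is the bookkeeping of the three resulting families of terms.

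Second, I would dispose of the easy families. The integral parts carry the factor $q^{-(2N-2)}$ and reassemble verbatim into the asserted right-hand side, so there is nothing to prove for them. The Bernoulli corrections $\delta(k+1,2N-k-1)B_{k+1}B_{2N-k-1}$ are supported only at the two boundary indices $k=0$ and $k=2N-2$, where they involve $B_{2N-1}$; as $2N-1$ is odd and at least $3$ when $N\ge 2$, these Bernoulli numbers vanish and the correction disappears. The exceptional value $N=1$ is the classical Meyer case, which I would treat separately and which returns the formula quoted in the introduction, providing the $i=j=1$ consistency check.

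Third, and this is the heart of the matter, I must prove that the fractional parts cancel, i.e. that $\sum_{k=0}^{2N-2}(-1)^{k+1}e_{N,k}\,s^R_{k+1,2N-k-1}(p,q)/\big((k+1)(2N-1-k)\big)=0$. Inserting the explicit shape $s^R_{e,f}=e!\,f!\,\tfrac{B_{2N}}{(2N)!}\big[\binom{2N-1}{f}p^{f}+(-1)^{e(n-1)}\binom{2N-1}{e}q_{n-1}^{\,e}\big]$ and cancelling the factorials against $(k+1)(2N-1-k)$, the sum collapses, up to the harmless factor $(-1)^N\tfrac{B_{2N}}{2N}$, to $A+B$ where $A=\sum_k(-1)^{k+1}e_{N,k}\,p^{\,2N-1-k}/(2N-1-k)$ gathers the $p$-terms and $B=\sum_k(-1)^{k+1}(-1)^{(k+1)(n-1)}e_{N,k}\,q_{n-1}^{\,k+1}/(k+1)$ the $q_{n-1}$-terms. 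Using the palindromy $e_{N,k}=e_{N,2N-2-k}$ of $(z^2+(p+s)z+1)^{N-1}$ I would rewrite $A$ as $\sum_k(-1)^{k+1}e_{N,k}\,p^{k+1}/(k+1)$, so that both $A$ and $B$ become primitives of the characteristic polynomial: each equals $-\int_0^{x}(u^2-(p+s)u+1)^{N-1}\,du$ evaluated at $x=p$, respectively $x=q_{n-1}$ (twisted by the sign $(-1)^{n-1}$). The claim $A+B=0$ is thus an identity between two such primitives.

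I expect this last identity to be the main obstacle, for it is precisely where the arithmetic of the real quadratic field must enter rather than mere combinatorics. I would establish it from the convergent relation $p\,q_{n-1}-p_{n-1}q=(-1)^{n+1}$ together with the constraint on $p$, $q_{n-1}$ and the trace $p+s$ imposed by $\epsilon^{-1}\in\SL_2(\FZ)$, and the symmetry $u\mapsto(p+s)-u$ of $u^2-(p+s)u+1$; these locate the endpoints $p$ and $q_{n-1}$ relative to the axis of symmetry so that the two primitives are negatives of one another and $A+B=0$. Once the cancellation is secured, only the integral parts survive, which is exactly the assertion.
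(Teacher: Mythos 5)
The heart of your argument---the claim that the fractional parts cancel among themselves, i.e.\ that $\sum_{k=0}^{2N-2}(-1)^{k+1}e_{N,k}\,s^{R}_{k+1,2N-k-1}(p,q)/\bigl((k+1)(2N-1-k)\bigr)=0$---is false, and the error traces back to your recollection of Siegel's formula. As stated in Sec.~\ref{partial_sec}, that formula contains, besides the linear combination of generalized Dedekind sums, an explicit polynomial term $\frac{1}{q^{2N-1}}\sum_{k}(-1)^{k}\frac{e_{N,k}}{k+1}(p+s)^{k+1}$ (together with the normalizations $\frac{2N}{B_{2N}}$ and $\frac{B_{2N}}{2N}$), which you have dropped. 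The actual cancellation, which is the content of Lemma~\ref{rational}, is between the $s^{R}$-contribution and this polynomial term. In your own notation, with $h(z)=z^2+(p+s)z+1$, one has $A+B=\int_0^{-p}h^{N-1}\,dz+\int_0^{-s}h^{N-1}\,dz=\int_0^{-(p+s)}h^{N-1}\,dz$, which is not zero. Your proposed symmetry argument in fact exposes the problem: $h$ is symmetric about $z_0=-(p+s)/2$, so the pieces $\int_{z_0}^{-p}$ and $\int_{z_0}^{-s}$ do cancel, but you are left with $2\int_0^{z_0}h^{N-1}\,dz=\int_0^{-(p+s)}h^{N-1}\,dz$; for $N=2$ this equals $t^3/6-t$ with $t=p+s=\epsilon+\epsilon^{-1}\ge 3$, hence nonzero. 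No arithmetic about convergents or the unit beyond $s=(-1)^{n-1}q_{n-1}$ can rescue $A+B=0$. What is true, and what the paper proves (by the substitution $z=t-s/2$ and an odd-integrand argument), is the polynomial identity $-\int_0^{-(p+s)}h^{N-1}\,dz+\int_0^{-p}h^{N-1}\,dz+\int_0^{-s}h^{N-1}\,dz=0$, whose first term is precisely the Siegel polynomial term you omitted.

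Your peripheral steps are fine: after Theorem~\ref{ma} is substituted, the integral parts do reassemble into the stated right-hand side (up to the $\frac{2N}{B_{2N}}$ normalization you also dropped), and the $\delta$-corrections vanish because they involve $B_{2N-1}=0$ for $N\ge 2$. But as written the proof cannot be completed, because the identity to which you reduce everything is false; you must reinstate the $(p+s)$-polynomial term of Siegel's formula and prove the three-integral identity above instead.
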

%

The second application is the distribution of the graph of the fractional part $R_{i+j}q^{i+j-2} s_{i,j}(p,q) \pmod{1}$ as function of 
$p/q$. 
Using Thm.\ref{ma}., $R_{i+j}q^{i+j-2} s_{i,j}(p,q) \pmod{1}$ equals $R_{i+j} q^{i+j-2} s^R_{i,j}(p,q)$.
\begin{thm}[Equidistribution]\label{ma2}
The fractional parts of a graph $\Big{(}\frac{p}{q},R_{i+j} q^{i+j-2} s_{ij}(p,q)\Big{)}$ are equidistributed in $[0,1)^2$,
where
\begin{equation}\label{Rij}
R_{i+j}:=\begin{pmatrix}
        N \\
       i
\end{pmatrix}  \beta_{N} r_N,
\end{equation}
and $\frac{\alpha_k}{\beta_k}$ is the reduced fraction of $B_k\neq 0$ with $\beta_k>0$ 
and 
$$
r_N:=\text{L.C.M.}\left\{\text{Denominator of} \,\,\beta_{N}\begin{pmatrix}N \\i+1\end{pmatrix} B_{i+1}B_{N-i-1}\Big| \text{$i$ odd}, 0\leq i \leq {N-2}\right\}.
$$
\end{thm}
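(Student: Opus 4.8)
The plan is to deduce the two-dimensional equidistribution directly from Theorem~\ref{ma}: first reduce the second coordinate modulo $1$ to the fractional part $s^R_{ij}$, then express $\frac{R_N}{q}s^R_{ij}(p,q)$ explicitly as a Laurent form in $p$ and $p^{-1}\bmod q$, and finally apply Weyl's criterion together with a Weil/Deligne bound for the resulting exponential sums. Writing $N=i+j$ and multiplying the identity of Theorem~\ref{ma} by $R_N q^{N-2}$ gives
\begin{equation*}
R_N q^{N-2} s_{ij}(p,q) = R_N s^I_{ij}(p,q) - R_N\,\delta(i,j) B_i B_j\, q^{N-2} + \frac{R_N}{q}\, s^R_{ij}(p,q).
\end{equation*}
The definition \eqref{Rij} of $R_N$ is tailored precisely so that $s^I_{ij}(p,q),\,s^R_{ij}(p,q)\in\frac1{R_N}\FZ$, whence $R_N s^I_{ij}(p,q)\in\FZ$; moreover $R_N\,\delta(i,j) B_i B_j\, q^{N-2}\in\FZ$ (for even $N>2$ the index $i=1$ forces $j=N-1$ odd so $B_{N-1}=0$, and for $N=2$ one has $R_2 B_1B_1=3$). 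Hence modulo $1$ only the fractional term survives,
\begin{equation*}
R_N q^{N-2} s_{ij}(p,q) \equiv \frac{R_N}{q}\, s^R_{ij}(p,q) \pmod{1},
\end{equation*}
as already noted before the statement.

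Next I would make this term explicit. Substituting the formula for $s^R_{ij}$ from Theorem~\ref{ma}, using $\binom{N}{i}\,i!\,j!/N!=1$ and $\beta_N B_N=\alpha_N\in\FZ$, the prefactors collapse to integers $a:=\alpha_N r_N\binom{N-1}{j}$ and $b:=\alpha_N r_N\binom{N-1}{i}$, giving
\begin{equation*}
\frac{R_N}{q}\, s^R_{ij}(p,q) = \frac{1}{q}\left[a\, p^{\,j} + (-1)^{i(n-1)} b\, q_{n-1}^{\,i}\right].
\end{equation*}
The continued-fraction identity $p q_{n-1}-q p_{n-1}=(-1)^{n-1}$ yields $q_{n-1}\equiv(-1)^{n-1}\bar p\pmod q$ with $\bar p:=p^{-1}\bmod q$, so $(-1)^{i(n-1)}q_{n-1}^{\,i}\equiv\bar p^{\,i}\pmod q$. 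Thus, modulo $1$, the second coordinate equals $\tfrac1q(a\,p^{\,j}+b\,\bar p^{\,i})$, a function of $p\bmod q$ alone.

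Finally I would invoke Weyl's criterion. Since the first coordinate is a Farey fraction it equidistributes on its own, so the real content is the joint distribution: equidistribution in $[0,1)^2$ follows once, for every $(h,k)\in\FZ^2\setminus\{0\}$, the sums
\begin{equation*}
\sum_{q\le Q}\ \sum_{\substack{p\bmod q\\ \gcd(p,q)=1}} \mbe\!\left(\frac{h p + k\,(a\, p^{\,j}+b\,\bar p^{\,i})}{q}\right), \qquad \mbe(x):=\exp(2\pi i x),
\end{equation*}
are $o(Q^2)$ as $Q\to\infty$. For prime $q$ the inner sum is governed by the Laurent polynomial $\phi(x)=hx+ka\,x^{\,j}+kb\,x^{-i}$ over $\FF_q$: if $k\equiv0$ but $h\not\equiv0$ it is a Ramanujan sum of size $O(1)$, while if $k\not\equiv0$ then for all $q$ large relative to $h,k$ the polynomial $\phi$ has genuine positive- and negative-degree parts, is nondegenerate, and the Weil/Deligne bound gives cancellation $O_N(q^{1/2})=o(\varphi(q))$. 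Composite $q$ reduces to prime powers by the twisted multiplicativity of these sums under the Chinese Remainder Theorem, estimating prime-power pieces by Hensel lifting. Dividing the total by $\sum_{q\le Q}\varphi(q)\sim\tfrac{3}{\pi^2}Q^2$ then yields the claim.

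The main obstacle is exactly this last step: upgrading the one-dimensional equidistribution of $\tfrac{R_N}{q}s^R_{ij}(p,q)$ from \cite{J-L} to the \emph{joint} equidistribution with $p/q$. This forces the extra linear term $hp/q$ into the exponential sum, and the crux is to verify that the combined Laurent polynomial $\phi$ stays nondegenerate for every $(h,k)\ne0$ so that the square-root cancellation of Weil/Deligne applies, and to control the composite-modulus and finitely many exceptional small-$q$ cases uniformly. Once that cancellation is secured, the averaging is routine.
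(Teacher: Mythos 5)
Your proposal is correct and follows essentially the same route as the paper: reduce the second coordinate modulo $1$ to the fractional part $\alpha_N r_N\bigl(\binom{N-1}{i}\bar p^{\,i}+\binom{N-1}{j}p^{\,j}\bigr)/q$ (the paper quotes this directly from \cite{J-L3} rather than re-deriving it from Theorem \ref{ma}), then verify Weyl's criterion for the pair via Weil--Deligne bounds for the exponential sum of the Laurent polynomial $hx+ka x^{j}+kb x^{-i}$ at prime modulus and a reduction to prime powers for composite $q$, yielding $K(\mathbf m\cdot F_{ij},q)\ll q^{3/4+\epsilon}$ and hence $o\bigl(\sum_{q\le Q}\varphi(q)\bigr)$. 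The step you flag as the main obstacle --- nondegeneracy of the combined Laurent polynomial for every nonzero $(h,k)$ --- is exactly what the paper settles by computing the Swan conductors $v_0^*$ and $v_\infty^*$ case by case (including the degenerate cases $k=0$ and $j=1$ with vanishing linear part, where the surviving pole at $0$ still forces cancellation).
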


For higher dimension, we have an analogue of not only the Dedekind sums but also the generalized Dedekind sums (\cite{zagier1},
\cite{FY}, \cite{CJL}). 
In \cite{CJL}, the decomposition of higher dimensional Dedekind sums into integral and rational parts is given but only the rational part is explicit. 

This paper is composed as follows. 
In Sec. 2,  we briefly recall the notion of Todd series and its cocycle property. 
We relate the generalized Dedekind sums to the coefficients of Todd series.
In Sec. 3, we relate a cone decomposition to a continued fraction. 
In Sec.4, we use the results of earlier sections to obtain an explicit formula lifting the Hickerson's
formula to the level of Todd series.  In Sec. 5, we give the proof of Thm.\ref{ma} by descending
to the level of generalized Dedekind sums. 
Sec.6 is devoted to the proof of Thm.\ref{higher_Meyer_formula} and contains 
a table for partial zeta values in our formula. 
Finally in Sec.7, we prove Thm.\ref{ma2}.  

\section*{Acknowlegements}
We are grateful to  Vincent Maillot and Haesang Sun for various discussions and encouragement.

\section{Todd series and generalized Dedekind sums}\label{sec_Todd}
Let $M$ be the standard lattice $\FZ^2$ in $\FR^2$. 
Let $\sigma = \sigma(v_1,v_2)$ be a lattice cone with  primitive lattices $v_1,v_2$ bounding $\sigma$. 
We take the orientation(ie. the order of the rays) into consideration so that $$\sigma(v_1,v_2) =-\sigma(v_2,v_1).$$ 
Let $A_\sigma$  be an integer coefficient matrix whose columns are the lattice vectors $v_1,v_2$ in $\FZ^2$.
We will often identify with a lattice cone $\sigma$ with its matrix $A_\sigma$.
$M_\sigma$ denotes the sublattice of $M$ generated by $v_1,v_2$. 
$\Gamma_\sigma := M/M_\sigma$ is isomorphic to a cyclic group of order $\left|\det(A_\sigma)\right|$.

For $g \in M$ representing $\gamma\in \Gamma_\sigma$,  we have rational numbers $a_{\sigma,i}(g)$, $i=1,2$ such that
$$
g = a_{\sigma,1}(\gamma)v_1 + a_{\sigma,2}(\gamma) v_2 .
$$

Since $a_{i,\sigma}$ is  integral on $M_\sigma$, we have a character  $\chi_{\sigma,i}$ on $\Gamma_\sigma$
as  
$$
\chi_{\sigma,i} (\gamma) := \bbe\left(a_{\sigma, i}(g)\right),\quad \text{for $i=1,2$}.
$$

\begin{defn}\label{Todd_series}
The Todd power series of $\sigma$ is defined as:
\begin{equation}\label{todd_definition}
\Td_\sigma (x_1,x_2): = \sum_{\gamma\in \Gamma_\sigma} \frac{x_1}{1-\chi_{\sigma,1}(\gamma) e^{-x_1}}
\frac{x_2}{1-\chi_{\sigma,2}(\gamma) e^{-x_2}}.
\end{equation}
\end{defn}
The Todd series is analytic in a neighborhood of $0\in \FC^2$ and has Taylor expansion.
Note that the Todd series is invariant of  the $\GL_2(\FZ)$ equivalent class of  cones by the following proposition
(ie. for $A\in \GL_2(\FZ)$,  
$\Td_\sigma (x_1,x_2) = \Td_{A\sigma} (x_1,x_2)$).

If $p, q>0$ are a pair of nonnegative integers prime to each other,  $(1,0)$ and $(p,q)$ are
primitive lattice vectors and linearly independent. 
Let $\sigma_{pq}$ denote the cone generated by
$(1,0)$ and $(p,q)$.   
We shall write $\Td_{pq}$ instead of $\Td_{\sigma_{pq}}$ for simplicity.

Then the generalized Dedekind sum $s_{ij}(p,q)$ is identified with the $x_1^i x_2^j$-coefficient of
$\Td_{pq}$.
For $i=j=1$, it is the classical Dedekind sum: $s_{11}(p,q) = s(p,q)$.
Let us denote the $x_1^i x_2^j$-coefficient of $\Td_{pq}(x_1,x_2)$ by $t_{ij}(p,q)$.
\begin{equation}
\Td_{pq}(x_1,x_2) = \sum_{i,j \ge 0} \frac{t_{ij}(p,q)}{i! j!} x_1^i x_2^j.
\end{equation}
Then we have 
\begin{equation}
t_{i j }(p,q)=-(-q)^{i+j-1}\left(s_{ij}(p,q)+\delta(i,j)B_i B_j\right),
\end{equation}\label{equa10}
where 
$\delta(i,j)=\begin{cases}1,& i=1 \text{ or }j=1,
\\0, &\text{ otherwise}\end{cases}$ and $B_i$ is the $i$-th
Bernoulli number.


For a lattice cone $\sigma$, its normalized Todd series is defined as follows:
\begin{defn}
The  normalized Todd series $S_\sigma(x_1,x_2)$ of $\sigma$ is defined as
$$
S_{\sigma} (x_1,x_2) = \frac{1}{\det(A_\sigma) x_1 x_2 } \Td_\sigma(x_1,x_2).
$$
Similarly, $S_{\sigma_{pq}}$ is abbreviated to $S_{pq}$ as in unnormalized case.
\end{defn}

$S_\sigma(x_1,x_2)$ is a Laurent series in variables $x_1, x_2$ and has simple poles along $x_1=0$ and $x_2=0$.
Also note that swapping  two rays of the cone interchanges not only the variables but also the 
sign in $S_\sigma$. Thus the orientation  is reflected in $S_\sigma$ and changes the sign(cf. \cite{J-L3}).

\begin{align}
S_{-\sigma} (x_1,x_2) &= - S_{\sigma}(x_2,x_1) \\
\Td_{-\sigma} (x_1,x_2) & = \Td_\sigma(x_2,x_1)
\end{align}


\begin{prop}[Todd cocycle]
If we set 
$
\Phi(\sigma) = S_{\sigma} (A_\sigma^{-1}(x_1,x_2))$, then we have
$$
\Phi(\sigma_1 + \sigma_2) = \Phi(\sigma_1) + \Phi(\sigma_2).$$
\end{prop}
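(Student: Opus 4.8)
The plan is to identify $\Phi(\sigma)$ with the integer-point generating function of the \emph{dual} cone $\sigma^\vee$, and then read off additivity from the standard valuation (inclusion--exclusion) property of such generating functions, the one non-pointed contribution vanishing automatically.

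First I would unwind the definition. Writing $A_\sigma = [\,v_1\,|\,v_2\,]$ and renaming the external variables $y=(y_1,y_2)$, the substitution $(x_1,x_2)^t = A_\sigma^{-1}y$ replaces the internal variable $x_i$ by the linear form $\langle w_i, y\rangle$, where $w_1,w_2$ is the basis dual to $v_1,v_2$ (the rows of $A_\sigma^{-1}$, so $\langle w_i,v_j\rangle=\delta_{ij}$). Expanding the two geometric factors of $S_\sigma$ and summing over $\gamma\in\Gamma_\sigma$ first, the inner sum becomes $\frac{1}{|\det A_\sigma|}\sum_{\gamma}\mathbf{e}\big(n_1 a_{\sigma,1}(\gamma)+n_2 a_{\sigma,2}(\gamma)\big)$. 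Since $n_1 a_{\sigma,1}+n_2 a_{\sigma,2}=\langle n_1 w_1+n_2 w_2,\,\cdot\,\rangle$, orthogonality of characters on $\Gamma_\sigma$ makes this equal to $1$ when $n_1 w_1+n_2 w_2\in M$ and $0$ otherwise. Reindexing by $m=n_1 w_1+n_2 w_2$, and noting that for $m\in M$ the coefficients $n_i=\langle m,v_i\rangle$ are automatically nonnegative integers precisely when $m\in\sigma^\vee=\mathrm{cone}(w_1,w_2)$, I obtain the key identity
\[
\Phi(\sigma)(y)=\sum_{m\in\sigma^\vee\cap M} e^{-\langle m,y\rangle},
\]
that is, $\Phi(\sigma)$ is the integer-point transform of $\sigma^\vee$, regarded as a rational function in $e^{-y_1},e^{-y_2}$.

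With this in hand the cocycle relation is dualized. For the subdivision $\sigma(v_1,v_3)=\sigma(v_1,v_2)\cup\sigma(v_2,v_3)$ along the ray $v_2$ (with the orientations ordered compatibly, so that the sign conventions for $S_{-\sigma}$ match up), a short angle computation shows that the dual cones $C_1=\sigma(v_1,v_2)^\vee$ and $C_2=\sigma(v_2,v_3)^\vee$ satisfy $C_1\cap C_2=\sigma(v_1,v_3)^\vee$ and $C_1\cup C_2=\{u:\langle u,v_2\rangle\ge 0\}$, the half-plane bounded by the line $v_2^\perp$. The pointwise identity $\mathbf{1}_{C_1}+\mathbf{1}_{C_2}=\mathbf{1}_{C_1\cup C_2}+\mathbf{1}_{C_1\cap C_2}$, pushed through the integer-point-transform valuation into rational functions, then yields
\[
\Phi(\sigma_1)+\Phi(\sigma_2)=G(C_1\cup C_2)+\Phi(\sigma_1+\sigma_2),
\]
where $G$ denotes the integer-point transform and $\Phi(\sigma_1+\sigma_2)=G(C_1\cap C_2)$.

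It remains to see $G(C_1\cup C_2)=0$. Since $C_1\cup C_2$ contains the whole line $v_2^\perp$, I would pick a basis $b,c$ of $M$ with $b$ spanning $v_2^\perp\cap M$, so that $(C_1\cup C_2)\cap M=\{jb+kc:j\in\FZ,\ k\ge 0\}$ and, with $t=e^{-\langle b,y\rangle}$ and $s=e^{-\langle c,y\rangle}$, the transform factors as $\big(\sum_{j\in\FZ}t^{j}\big)\big(\sum_{k\ge 0}s^{k}\big)$; the first factor vanishes identically as a rational function (split $j\ge 0$ from $j<0$). Hence $G(C_1\cup C_2)=0$ and the additivity follows. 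The main obstacle is the bookkeeping in the first step: getting the character orthogonality and the automatic integrality and positivity of the exponents right so that $\sigma^\vee$ appears cleanly, and then interpreting all the sums within a single rational-function framework so that the valuation identity is legitimate across cones whose defining series converge in different regions (and so that the vanishing of the non-pointed piece is justified).
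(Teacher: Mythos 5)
Your argument is correct. The paper itself gives no proof of this proposition---it simply refers to Theorem~3 of Pommersheim's paper---and the proof there is essentially the one you sketch: identify $\Phi(\sigma)$ with the lattice-point generating function $\sum_{m\in\sigma^\vee\cap M}e^{-\langle m,y\rangle}$ of the dual cone via character orthogonality on $\Gamma_\sigma$, then deduce additivity from the valuation property of integer-point transforms together with the vanishing (as a rational function) of the transform of the half-plane, a non-pointed cone. The one piece of bookkeeping to watch is the sign: the paper normalizes by $\det A_\sigma$ rather than $|\det A_\sigma|$, so for a negatively oriented cone $\Phi(\sigma)$ is \emph{minus} the generating function of $\sigma^\vee$, and since the cones $\sigma_k$ in the paper's decomposition alternate orientation this sign must be threaded through the inclusion--exclusion---but you have flagged exactly this point, and it causes no difficulty.
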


\begin{proof}
For the proof, we refer the reader to Thm.3 of \cite{Pom2}.
\end{proof}

\section{Continued fraction and Cone decomposition}
In this section, we would like  to relate nonsingular decomposition of $\sigma_{p,q}$ with the continued fraction of $p/q$
as is done in \cite{brion}. 
According to the cocycle property of the (normalized) Todd series, 
the (normalized) Todd series is decomposed into sum of those 
of nonsingular cones.

Suppose $q$ and $p$ be relatively prime positive integers  and suppose $q>p$.
Let $p/q$ have the following continued fraction expansion:
$$
\frac{q}{p}=a_1+ \cfrac{1}{a_2 + \cdots\cfrac{1}{a_n}},$$
where $a_i\geq1$ are all positive integers. 
Put $(p_{-1},q_{-1})=(1,0)$,
$(p_0,q_0)=(0,1)$. 
For $i\geq1$, $p_i/q_i$ be the $i$-th convergent of $p/q$:
$$
\frac{q_i}{p_i}:=a_1+ \cfrac{1}{a_2 + \cdots\cfrac{1}{a_i}}.
$$
$p_i$ and $q_i$ are uniquely determined by assuming $q_i>0$ and $\gcd(p_i,q_i)=1$.
$(p_i,q_i)$ is computed by iterating elementary M\"obius transformations $A_i =\left( \begin{smallmatrix}  a_i & 1 \\ 1 & 0\end{smallmatrix}\right)$ in $\GL_2(\FZ)$ in the following manner:
\begin{equation}
\begin{pmatrix} p_i \\ q_i  \end{pmatrix} = A_1 A_2 \cdots A_i \begin{pmatrix} 1 \\ 0\end{pmatrix} = A_1 A_2 \cdots A_{i-1} 
\begin{pmatrix}
a_i \\ 1
\end{pmatrix}
\end{equation}
Comparing $(p_k,q_k)$, $(p_{k-1},q_{k-1})$ and $(p_{k+1},q_{k+1})$ using above formula, we obtain 
\begin{equation}\label{relation_p_k}
(p_{k+1},q_{k+1}) = (p_{k-1},q_{k-1}) + a_{k+1} (p_k,q_k)
\end{equation}
which will be used in the next section.

Note that $v_k=(p_k,q_k)$ for $-1\leq k \leq n$ is  primitive lattice vector in the 1st quadrant.
We have  the following (virtual) cone decomposition of $\sigma$
into nonsingular cones: 
$$
\sigma:= \sigma_{pq}= \sigma(v_{-1},v_{n})=\sum_{k=-1}^{n-1} \sigma_k,
$$
where $\sigma_k := \sigma(v_k, v_{k+1})$.

\section{Nonsingular decomposition of Todd series}

From the additivity of normalized Todd series, we have 
$$\Phi(\sigma_{p,q})=\sum_{k=-1}^{n-2}\Phi(\sigma_k),$$
where $$\Phi(\sigma)=S_{\sigma}(A_{\sigma}^{-1}(x,y)).$$
For $\sigma_e=\sigma((1,0),(0,1))$, from  $\GL_2(\FZ)$-invariance of Todd power series,
we have 
\begin{equation*}
\begin{split}
&S_{\sigma_k}=S_{A_{\sigma_k }\sigma_e}=\frac{1}{\det{A_{\sigma_k}}xy} \Td_{A_{\sigma_k }\sigma_e}(x,y)\\
&=\frac{1}{\det{A_{\sigma_k}}xy} \Td_{\sigma_e}(x,y)=(-1)^{k+1}\frac{\Td(x,y)}{xy}.
\end{split}
\end{equation*}
Thus we obtain the following expression:
\begin{equation}\label{normalized_Todd}
S_{pq}(x,y)=\sum_{k=-1}^{n-1}(-1)^{k+1}F\left(A_{\sigma_k}^{-1}A_{\sigma}(x,y)^{t}\right),
\end{equation}
where
$$F(x,y)=\frac{1}{1-e^{-x}}\frac{1}{1-e^{-y}}=\frac{\Td (x,y)}{xy}.$$

For $-1\leq k \leq n-1$ let $M_k(x,y)$ be linear forms in $x$ and $y$ defined as follows:
\begin{equation}
A_{\sigma_k}^{-1} A_\sigma \begin{pmatrix} x \\ y \end{pmatrix}= \begin{pmatrix} M_{k+1}(x,y) \\ M_k(x,y)\end{pmatrix} 
\end{equation}
Since $A_{\sigma_k}^{-1}\in \GL_2(\FZ)$, $M_k(x,y)$ has coefficients in $\FZ$.

As $\det(A_\sigma)= q$, by multiplying $qxy$, we have 
\begin{equation}\label{S_sigma}
\begin{split}
\Td_{pq}(x,y)&=qxy\sum_{k=-1}^{n-1}(-1)^{k+1}\frac{\Td(M_k,M_{k+1})}{M_k M_{k+1}}\\
&=qxy\sum_{k=-1}^{n-1}(-1)^{k+1}\sum_{i=0}^{\infty}\sum_{j=0}^{\infty}(-1)^{i+j}\frac{B_i B_j}{i! j!}M_k^{j-1}M_{k+1}^{i-1}.
\end{split}
\end{equation}

Denote by $\Td_{\sigma}^N$ the degree $N$  homogeneous
part of $\Td_{\sigma}$. 
Then from \eqref{S_sigma} 
$\Td_\sigma^{N}$ is given as follows:
\begin{equation}\label{ToddN}
\begin{split}
\Td_{\sigma}^{N}
=&qxy\sum_{k=-1}^{n-1}(-1)^{k+1}\sum_{i=0}^{N-2}(-1)^N\frac{B_{i+1}}{(i+1)!}\frac{B_{N-i-1}}{(N-i-1)!}M_k^{N-2-i}M_{k+1}^{i}\\
&+(-1)^Nqxy\frac{B_{N}}{N!}\sum_{k=-1}^{n-1}(-1)^{k+1}\frac{M_k^{N}+M_{k+1}^{N}}{M_kM_{k+1}}.
\end{split}
\end{equation}

From \eqref{relation_p_k}, 
for $k\ge 0$ 
we have
$$
M_{k-1}-M_{k+1}=a_{k+1}M_k.
$$
and 
\begin{equation}\label{qxy}
\begin{split}
&qxy\sum_{k=-1}^{n-1}(-1)^{k+1}\frac{M_k^{N}+M_{k+1}^{N}}{M_kM_{k+1}}\\
&=qxy\left(\sum_{k=0}^{n-1}(-1)^ka_{k+1}\sum_{i=0}^{N-2}M_{k-1}^{N-2-i}M_{k+1}^i\right)+M_0^{N-1}x+M_{n-1}^{N-1}y.
\end{split}
\end{equation}

Therefore, plugging \eqref{qxy} into \eqref{ToddN},  we obtain
\begin{equation}\label{eq1}
\begin{split}
&\Td_{pq}^{N}
=qxy\sum_{k=-1}^{n-1}(-1)^{k+1}\sum_{i=0}^{N-2}(-1)^{N}\frac{B_{i+1}}{(i+1)!}\frac{B_{N-i-1}}{(N-i-1)!}M_k^{N-2-i}M_{k+1}^{i}\\
&+(-1)^Nq\frac{B_{N}}{N!}xy\left(\sum_{k=0}^{n-1}(-1)^ka_{k+1}\sum_{i=0}^{N-2}M_{k-1}^{N-i-2}M_{k+1}^i
\right)+\frac{B_{N}}{N!}M_0^{N-1}x+\frac{B_{N}}{N!}M_{n-1}^{N-1}y.
\end{split}
\end{equation}


\section{Computation of Dedekind sums}
In this section, using the result of earlier sections, we give a proof of Thm.\ref{ma}.

Let $m_k$ and $\ell_k$ be the coefficients of $M_k$:
$
M_k(x,y) = m_k x+\ell_k y.
$ 
We already know that $m_k$ and $\ell_k$ are determined by the continued fraction of $p/q$ in the following manner:
\begin{equation}
M_k(x,y) = m_k x+\ell_k y=
\begin{cases}
qy, &   k=-1\\
(-1)^{k}\left(q_kx+(p q_k-q p_k)y\right), & 0\le k \le n-1\\
(-1)^n qx, & k=n .
\end{cases}
\end{equation}
Then each part of \eqref{eq1} is computed as follows:
\begin{itemize}
\item
\begin{equation*}
\begin{split}
&xy\sum_{k=-1}^{n-1}(-1)^{k+1}\sum_{i=0}^{N-2}\frac{B_{i+1}B_{N-i-1}}{(i+1)!(N-i-1)!}(m_k x+\ell_ky)^{N-2-i}(m_{k+1} x+\ell_{k+1}y)^{i}\\
& = \sum_{k=-1}^{n-1}(-1)^{k+1}\sum_{\alpha,\beta}f_k(\alpha,\beta)x^{\alpha+\beta+1} y^{N-1-\alpha-\beta}
\end{split}
\end{equation*}
where 
$$
f_k(\alpha,\beta):=\frac{m_k^{\beta}m_{k+1}^{\alpha}}{\alpha! \beta!}\sum_{i=\alpha}^{N-2-\beta} \frac{1}{(N-2-i-\beta)!}\frac{1}{(i-\alpha)!}\frac{B_{i+1}}{(i+1)}\frac{B_{N-i-1}}{(N-i-1)}\ell_k^{N-2-i-\beta}\ell_{k+1}^{i-\alpha}  
$$
\item
\begin{equation*}
\begin{split}
&xy\left(\sum_{k=0}^{n-1}(-1)^ka_{k+1}\sum_{i=0}^{N-2}(m_{k-1}x+\ell_{k-1}y)^{N-i-2}(m_{k+1}x+\ell_{k+1}y)^i
\right)\\
&=\sum_{k=0}^{n-1}(-1)^ka_{k+1}\sum_{\alpha, \beta}g_k(\alpha,\beta)x^{\alpha+\beta+1} y^{N-1-\alpha-\beta},
\end{split}
\end{equation*}
where
$$
g_k(\alpha,\beta):= \frac{ m_{k-1}^{\beta}m_{k+1}^{\alpha}}{\alpha! \beta!}\sum_{i=\alpha}^{ N-2-\beta}\frac{(N-i-2)!}{(N-2-i-\beta)!}\frac{i!}{(i-\alpha)!}\ell_{k-1}^{N-i-2-\beta}\ell_{k+1}^{i-\alpha}.
$$
\item
$$
M_0^{N-1}x=\sum_{i=0}^{N-1}{N-1 \choose i} 
m_0^{i}\ell_0^{j}x^{i+1}y^{j}.
$$
\item
$$
M_{n-1}^{N-1}y=\sum_{i+j=N-1}{N-1 \choose i} 
m_{n-1}^i \ell_{n-1}^{j}x^{i}y^{j+1}
$$
\end{itemize}

\begin{thm}\label{tcoefficient}
The coefficient of $x^{e+1}y^{f+1}$ of  $\Td_{pq}^{N}$ with $e+f=N-2$ is
\begin{equation*}
\begin{split} 
& q \sum_{\alpha+\beta=e} \Big[\sum_{k=-1}^{n-1}(-1)^{k+1}f_k(\alpha,\beta)+ \frac{B_N}{N!}\sum_{k=0}^{n-1}(-1)^ka_{k+1}g_k(\alpha,\beta)\Big{]} \\
& + \frac{B_N}{N!}\binom{N-1}{f+1}p^{f+1}+\frac{B_N}{N!} \binom{N-1}{e+1}((-1)^{n-1}q_{n-1})^{e+1}.
\end{split}
\end{equation*}
\end{thm}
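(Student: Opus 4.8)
The plan is to read off the coefficient of $x^{e+1}y^{f+1}$ directly from the closed form \eqref{eq1} for the degree-$N$ homogeneous part $\Td_{pq}^{N}$, so the whole argument is essentially a bookkeeping computation once the four monomial expansions established immediately above are in hand. Since \eqref{eq1} already exhibits $\Td_{pq}^{N}$ as a sum of four homogeneous pieces — the double sum built from the $f_k$, the $a_{k+1}$-weighted double sum built from the $g_k$, and the two boundary terms $\frac{B_N}{N!}M_0^{N-1}x$ and $\frac{B_N}{N!}M_{n-1}^{N-1}y$ — it suffices to extract the $x^{e+1}y^{f+1}$-coefficient from each piece separately and add. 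I would assume $N$ even throughout (as in Thm.\ref{ma}), so that $(-1)^N=1$.

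For the first two pieces I would use the bullet expansions verbatim. There each inner summand is a scalar multiple of $x^{\alpha+\beta+1}y^{N-1-\alpha-\beta}$, so selecting the power $x^{e+1}y^{f+1}$ amounts to imposing $\alpha+\beta=e$; the matching constraint on the $y$-degree, $N-1-\alpha-\beta=f+1$, is then automatic because $e+f=N-2$. Restoring the prefactors shown in \eqref{eq1}, the first piece contributes $(-1)^N q\sum_{\alpha+\beta=e}\sum_{k=-1}^{n-1}(-1)^{k+1}f_k(\alpha,\beta)$ and the second $(-1)^N q\frac{B_N}{N!}\sum_{\alpha+\beta=e}\sum_{k=0}^{n-1}(-1)^k a_{k+1}g_k(\alpha,\beta)$. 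With $N$ even these reduce to exactly the bracketed expression in the statement.

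The two boundary pieces require evaluating the explicit linear forms $M_k=m_kx+\ell_ky$. From the case list I have $M_0=x+py$, i.e. $m_0=1$ and $\ell_0=p$, so the $x^{e+1}y^{f+1}$-coefficient of $\frac{B_N}{N!}M_0^{N-1}x$ is $\frac{B_N}{N!}\binom{N-1}{e}m_0^{e}\ell_0^{f+1}=\frac{B_N}{N!}\binom{N-1}{f+1}p^{f+1}$, using the symmetry $\binom{N-1}{e}=\binom{N-1}{f+1}$ valid since $e+(f+1)=N-1$. Likewise the coefficient of $\frac{B_N}{N!}M_{n-1}^{N-1}y$ is $\frac{B_N}{N!}\binom{N-1}{e+1}m_{n-1}^{e+1}\ell_{n-1}^{f}$ with $m_{n-1}=(-1)^{n-1}q_{n-1}$; this matches the claimed $\frac{B_N}{N!}\binom{N-1}{e+1}((-1)^{n-1}q_{n-1})^{e+1}$ precisely when $\ell_{n-1}=1$, which follows from $\ell_{n-1}=(-1)^{n-1}D_{n-1}$ together with the determinant identity $D_{n-1}=pq_{n-1}-qp_{n-1}=(-1)^{n-1}$ derived from the convergent recurrence \eqref{relation_p_k}.

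The only delicate points are of bookkeeping type. Inside the $f_k$- and $g_k$-sums I must pass through the boundary indices $k=-1$ and $k=n$, where one coefficient vanishes ($m_{-1}=0$, $\ell_n=0$), so those terms survive only for $\beta=0$ or for the top index $i=\alpha$; however this is already built into the bullet expansions, so no extra work is needed. The genuine obstacle is the sign accounting in the fourth summand: pinning down $\ell_{n-1}$ exactly (equivalently the sign of $D_{n-1}$) is what makes that term collapse to a pure power of $q_{n-1}$ with no residual $(-1)^{f}$, and it is the one step where a convention slip would alter the final formula. I would therefore verify the determinant identity from \eqref{relation_p_k} against the initialization $(p_{-1},q_{-1})=(1,0)$, $(p_0,q_0)=(0,1)$ before assembling the four contributions into the asserted coefficient.
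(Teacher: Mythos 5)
Your proposal is correct and follows essentially the same route as the paper: the theorem is obtained by reading off the $x^{e+1}y^{f+1}$-coefficient from the four bullet-point expansions of \eqref{eq1}, with the constraint $\alpha+\beta=e$ automatically fixing the $y$-degree. Your extra verification that $\ell_{n-1}=(-1)^{n-1}D_{n-1}=1$ (via $D_{n-1}=(-1)^{n-1}$ from the recurrence and initialization) is exactly the sign bookkeeping the paper leaves implicit, and it is carried out correctly.
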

Combining the formula (10) and Thm. \ref{tcoefficient}, we have the proof of Thm.\ref{ma}.

\section{Application 1: partial zeta values}\label{partial_sec}
In this section, we apply the formula for generalized Dedekind sums to the formula for partial zeta values of ideals of real quadratic fields at nonpositive integers due to Siegel 
and finish the proof of Thm.\ref{higher_Meyer_formula}.

Let us recall the definition of the partial zeta function of an ideal of a number field. 
For an ideal $\fa$ of a number field $K$, the partial zeta function $\zeta(\fa, s)$ is defined as
$$
\zeta(\fa,s) := \sum_{\substack{\fb \sim \fa \\ \fb : \text{integral ideal}}} N\fb^{-s}, \quad\text{for $Re(s)>1$}.
$$
$\zeta(\fa,s)$ is meromorphically continued to the whole complex plane with a unique simple pole at $s=1$. 

Let  $\fa=[\alpha,\beta]$ be an ideal of a real quadratic field $K$. 
Let $\omega=-\frac{\beta}{\alpha}>\omega'$ and $\epsilon>1$ 
be the totally positive fundamental unit. 
With respect to the basis $\alpha, \beta$, the multiplication by $\epsilon^{-1}$ is represented by a hyperbolic element 
$\begin{pmatrix}
p & r \\ q & s
\end{pmatrix}\in \SL_2(\FZ)$:
$$ \epsilon^{-1} \begin{pmatrix}
      \alpha   \\
        \beta
\end{pmatrix}=\begin{pmatrix}
    p&q   \\
     r&s
\end{pmatrix}\begin{pmatrix}
      \alpha   \\
        \beta
\end{pmatrix}.$$

In this setting, a reformulation of the  formula of Siegel by Coates-Sinnott(Lemma 11  in \cite{C-S}) is given as follows:
\begin{thm}[Siegel's formula]
\begin{equation}
\begin{split}
(-1)^N\frac{2N}{B_{2N}}&\zeta(\fa,1-N)=\\
&\frac{1}{q^{2N-1}} \sum_{k=0}^{2N-2}(-1)^{k } \frac{e_{N,  k}}{k + 1} (p + s)^{k + 1}+ \frac{B_{2N}}{2N}\sum_{k=0}^{2N-2}(-1)^{k + 1} e_{N,  k} \frac{s_{k + 1,  2 N - k - 1}(p,q)}{(k + 1) (2 N - 1 - k)}
\end{split}
\end{equation}
where $e_{N,k}$ is given in the following way:
$$
(z^2+(p+s)z+1)^{N-1}=\sum_{i=0}^{2N-2}e_{N,k}z^k
$$
\end{thm}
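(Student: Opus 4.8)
The plan is to prove this via Shintani's cone method, which realizes partial zeta values at non-positive integers as finite sums of products of periodic Bernoulli functions over a cone. I would first embed $K\hookrightarrow\FR^2$ by $\mu\mapsto(\mu,\mu')$, so that $\fa$ becomes a lattice and the totally positive unit $\epsilon$ acts hyperbolically, its inverse corresponding to the matrix $\left(\begin{smallmatrix}p&r\\q&s\end{smallmatrix}\right)$ with eigenvalues $\epsilon^{-1},\epsilon$, determinant $1$ and trace $\tr(\epsilon^{-1})=\epsilon^{-1}+\epsilon=p+s$. Representing the ideals in the class by totally positive elements of $\fa$ modulo $\langle\epsilon\rangle$, one reduces $\zeta(\fa,s)$ to a constant multiple (a power of $N\fa$) of
\begin{equation*}
\sum_{\nu\in \fa\cap C}(\nu\nu')^{-s},
\end{equation*}
where $C$ is the half-open two-dimensional cone spanned by $v_1=\alpha$ and $v_2=\epsilon^{-1}\alpha$, a fundamental domain for $\langle\epsilon\rangle$ on the open first quadrant. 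This is a Barnes--Shintani double zeta function attached to the simplicial cone $C$.

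The decisive computation is that the norm form itself is the generating polynomial. Writing $\nu=x_1v_1+x_2v_2$ and using $Nv_1=Nv_2=N\alpha$ together with $v_1v_2'+v_1'v_2=N\alpha\,(\epsilon+\epsilon^{-1})=N\alpha\,(p+s)$, one gets
\begin{equation*}
\nu\nu'=N\alpha\,(x_1^2+(p+s)x_1x_2+x_2^2).
\end{equation*}
At $s=1-N$ the summand is $(\nu\nu')^{N-1}=(N\alpha)^{N-1}(x_1^2+(p+s)x_1x_2+x_2^2)^{N-1}$, so, setting $z=x_2/x_1$, the coefficient of $x_1^{2N-2-k}x_2^{k}$ is exactly $e_{N,k}$, the coefficient of $z^k$ in $(z^2+(p+s)z+1)^{N-1}$. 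Thus the binomial structure of the theorem is nothing but the $(N-1)$-st power of the norm form of the cone, and the weight $2N$ together with the indices $(k+1,\,2N-1-k)$ are forced.

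I would then invoke the standard evaluation of the Shintani zeta function at $s=1-N$: decomposing $\fa$ into the $q=[\fa:\FZ v_1+\FZ v_2]$ residue classes, represented by $k\beta=\tfrac{k}{q}v_2-\tfrac{kp}{q}v_1$ for $0\le k\le q-1$, the analytic continuation produces, for each monomial $x_1^{2N-2-k}x_2^{k}$, a factor $\tfrac{1}{(k+1)(2N-1-k)}$ and a residue sum of $\bar B_{2N-1-k}(\{kp/q\})\,\bar B_{k+1}(\{k/q\})$, which is precisely the generalized Dedekind sum $s_{k+1,\,2N-1-k}(p,q)$, up to the reflection $B_n(1-x)=(-1)^nB_n(x)$ responsible for the signs $(-1)^{k+1}$. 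The contribution of the two bounding rays of $C$ is a degenerate term carrying no Dedekind sum but a polynomial in $p+s$, producing $\tfrac{1}{q^{2N-1}}\sum_k(-1)^k\tfrac{e_{N,k}}{k+1}(p+s)^{k+1}$. Assembling the interior and boundary contributions and multiplying by the normalization $(-1)^N\tfrac{2N}{B_{2N}}$ yields the stated identity, where the prefactor $\tfrac{B_{2N}}{2N}$ surviving on the Dedekind-sum term is the familiar value of $\zeta(1-2N)$.

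The main obstacle is the Shintani regularization itself, namely the analytic continuation of the divergent cone sum $\sum_\nu(\nu\nu')^{N-1}$ to $s=1-N$ and the clean extraction of both the periodic Bernoulli functions and the boundary polynomial term. Equally delicate is the bookkeeping of the half-open fundamental domain and the orientation, so that the unit is counted exactly once, the index $q$ and the offsets $k/q,\ pk/q$ are matched correctly to the arguments of $s_{k+1,\,2N-1-k}$, and all signs together with the constant $\tfrac{2N}{B_{2N}}$ come out as written; once the norm-form identity $\nu\nu'=N\alpha\,(x_1^2+(p+s)x_1x_2+x_2^2)$ is in place, the remaining combinatorics is routine.
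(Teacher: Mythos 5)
First, a point of reference: the paper does not prove this statement at all --- it is imported verbatim as Lemma 11 of Coates--Sinnott \cite{C-S}, itself a reformulation of Siegel's formula --- so you are supplying a proof where the paper supplies only a citation. Your Shintani-cone strategy is indeed the standard route to such identities, and your central computation is correct and is the right explanation of the shape of the formula: with $v_1=\alpha$, $v_2=\epsilon^{-1}\alpha$ and $N\epsilon=1$ one gets $\nu\nu'=N\fa\,(x_1^2+(p+s)x_1x_2+x_2^2)$, the index $[\fa:\FZ v_1+\FZ v_2]=q$, and coset representatives $k\beta=\tfrac{k}{q}v_2-\tfrac{kp}{q}v_1$, which together force the coefficients $e_{N,k}$, the weight $2N$, and the index pairs $(k+1,2N-1-k)$. (One bookkeeping correction: integral ideals in the class of $\fa$ are parametrized by totally positive elements of $\fa^{-1}$, not of $\fa$, so the cone sum must be taken over the inverse ideal or the statement transported accordingly; standard, but it has to be done.)

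Two steps in your accounting are genuinely wrong or unverified. First, the polynomial term $\frac{1}{q^{2N-1}}\sum_k(-1)^k\frac{e_{N,k}}{k+1}(p+s)^{k+1}$ does not come from the two bounding rays of $C$: the lattice points on a ray contribute, after Hurwitz regularization, a multiple of $\zeta(2-2N)=-B_{2N-1}/(2N-1)$, which vanishes for $N\ge 2$. That term is instead the artifact of passing from the Bernoulli polynomials $B_n$ evaluated at the unreduced coset coordinates produced by the continuation to the periodic functions $\bar{B}_n(\{\cdot\})$ that define $s_{ij}$; consistently with this, the paper's Lemma \ref{rational} shows this polynomial is exactly $-\tfrac{2N}{B_{2N}}$ times the total contribution of the fractional parts $s^R_{k+1,2N-1-k}(p,q)$ --- a reduction artifact, not a boundary contribution. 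As written, your derivation would produce no polynomial term at all and an uncancelled discrepancy in the Bernoulli arguments. Second, you rationalize the prefactor on the Dedekind-sum term as ``the value $\zeta(1-2N)$,'' but you never verify the normalization; in fact the displayed $\tfrac{B_{2N}}{2N}$ appears to be a typo for $\tfrac{2N}{B_{2N}}$ (the computation at the end of Section \ref{partial_sec} uses the latter, and the $N=1$ specialization against Meyer's formula requires it), so a constant-checking step would have been essential here. The analytic continuation itself you may legitimately quote from Shintani or from Siegel's original argument, but the two items above are where the content of the identity actually lives, and they are not correctly accounted for in your sketch.
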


Now we apply our formula in Thm.\ref{ma} to Siegel's formula.
Soon we will see   that the contribution of the rational part $s_{e,f}^R(p,q)$ of Dedekind sums  canceled by themselves. 
It is essentially due to the following observation. 
\begin{lem}\label{rational}
$$  \sum_{k=0}^{2N-2}(-1)^{k } \frac{e_{N,  k}}{k + 1} (p + s)^{k + 1}+\frac{2N}{B_{2N}}\sum_{k=0}^{2N-2}(-1)^{k + 1} \frac{ e_{N,  k}}{(k + 1) (2 N - 1 - k)}s^{R}_{k + 1,  2 N - k - 1}(p,q)=0$$
\end{lem}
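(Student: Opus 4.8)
The plan is to substitute the explicit shape of the fractional part $s^R$ supplied by Theorem \ref{ma} and then to recognize the two resulting sums as integrals of one and the same self-reciprocal polynomial, so that the identity becomes a statement about cancellation of definite integrals. First I would apply Theorem \ref{ma} with its weight parameter set to $2N$ (since the two indices $k+1$ and $2N-k-1$ sum to $2N$), and simplify the factorial--binomial prefactors: writing $e=k+1$, $f=2N-k-1$, one finds $e!f!\tfrac{B_{2N}}{(2N)!}\binom{2N-1}{f}=\tfrac{(k+1)B_{2N}}{2N}$ and $e!f!\tfrac{B_{2N}}{(2N)!}\binom{2N-1}{e}=\tfrac{(2N-k-1)B_{2N}}{2N}$, so that
\begin{equation*}
s^R_{k+1,\,2N-k-1}(p,q)=\frac{B_{2N}}{2N}\left[(k+1)p^{2N-k-1}+(-1)^{(k+1)(n-1)}(2N-k-1)\,q_{n-1}^{\,k+1}\right].
\end{equation*}
Feeding this into the second sum of the Lemma, the factors $\tfrac{2N}{B_{2N}}$ and $\tfrac{1}{(k+1)(2N-1-k)}$ collapse and the second sum reduces to
\begin{equation*}
\sum_{k=0}^{2N-2}(-1)^{k+1}e_{N,k}\left[\frac{p^{2N-k-1}}{2N-1-k}+(-1)^{(k+1)(n-1)}\frac{q_{n-1}^{\,k+1}}{k+1}\right].
\end{equation*}

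Next I would pass to an integral representation using the defining generating function. Set $Q(t)=t^2-(p+s)t+1$, so that $\sum_k e_{N,k}(-t)^k=Q(t)^{N-1}$, while $\sum_k e_{N,k}t^k=(t^2+(p+s)t+1)^{N-1}$. Using $\tfrac{w^{k+1}}{k+1}=\int_0^w t^k\,dt$, the first sum of the Lemma is exactly $\int_0^{p+s}Q(t)^{N-1}\,dt$, and the $p$-term of the second sum is an integral of $Q$ as well. The crucial structural input is that the polynomial $(z^2+(p+s)z+1)^{N-1}$ is \emph{self-reciprocal}, whence $e_{N,k}=e_{N,2N-2-k}$. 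Re-indexing the $p$-term by $j=2N-2-k$ and invoking this palindrome identity turns it into $-\int_0^p Q(t)^{N-1}\,dt$. Combining, the first sum together with the $p$-contribution of the second equals $\int_p^{p+s}Q(t)^{N-1}\,dt$, and since $Q$ is invariant under $t\mapsto (p+s)-t$, the substitution $u=(p+s)-t$ gives $\int_0^{s}Q(t)^{N-1}\,dt$.

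It then remains to show that the $q_{n-1}$-contribution of the second sum equals $-\int_0^s Q(t)^{N-1}\,dt$. Summing that contribution as an integral yields, according to the parity of $n-1$, either $-\int_0^{q_{n-1}}Q(t)^{N-1}\,dt$ (when $n$ is odd) or $\int_0^{q_{n-1}}(t^2+(p+s)t+1)^{N-1}\,dt$ (when $n$ is even); in both cases the sign $t\mapsto -t$ matches the target provided $s=(-1)^{n-1}q_{n-1}$. The hard part will be establishing precisely this relation $s=(-1)^{n-1}q_{n-1}$, together with the trace identity that the coefficient of $z$ in the reciprocal polynomial is genuinely $p+s$. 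This is where the conventions of Section 3 must be pinned down: one has to read off the diagonal entry $s$ of the matrix of $\epsilon^{-1}$ from the factorization into the elementary matrices $A_i=\left(\begin{smallmatrix}a_i&1\\1&0\end{smallmatrix}\right)$, using $A_1\cdots A_n=\left(\begin{smallmatrix}q_n&q_{n-1}\\ p_n&p_{n-1}\end{smallmatrix}\right)$ and the determinant relation $\det(A_1\cdots A_n)=(-1)^n$, and track how the sign $(-1)^{n-1}$ threads through the identification of $\epsilon^{-1}$ with this product. Once the relation $s=(-1)^{n-1}q_{n-1}$ is in hand, the two integrals cancel and the Lemma follows.
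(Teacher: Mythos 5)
Your proof is correct and follows essentially the same route as the paper's: substitute the explicit formula for $s^R$ coming from Theorem~\ref{ma}, convert the sums into definite integrals of $(z^2+(p+s)z+1)^{N-1}$ via the palindrome identity $e_{N,k}=e_{N,2N-2-k}$, and cancel using $s=(-1)^{n-1}q_{n-1}$ (which the paper does not re-derive but simply cites from van der Geer, pp.~39--40, so your flagged ``hard part'' is handled by citation there). The only cosmetic difference is the final cancellation step: you use the reflection symmetry $t\mapsto(p+s)-t$ of $t^2-(p+s)t+1$, while the paper shifts the variable by $s/2$ and invokes oddness of the resulting integrand.
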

\begin{proof}
We note that from Theorem 1.1, we have 
$$
s^{R}_{e, f}(p,q)= \frac{B_{e + f}}{(e + f)}  \Big{(}e p^
    f + f  \Big{(}(-1)^{n-1}q_{n-1}\Big{)}^e\Big{)}.$$
 Also we have   $\quad s=(-1)^{n-1}q_{n-1}$ (See pp. 39--40, \cite{Van}).
 
Putting these into the left hand side of the equality, we have
 \begin{equation*}
 \begin{split}
  T:=& \sum_{k=0}^{2N-2}(-1)^{k } \frac{e_{N,  k}}{k + 1} (p + s)^{k + 1}+\frac{2N}{B_{2N}}\sum_{k=0}^{2N-2}(-1)^{k + 1} \frac{ e_{N,  k}}{(k + 1) (2 N - 1 - k)}s^{R}_{k + 1,  2 N - k - 1}(p,q)\\
  &= \sum_{k=0}^{2N-2}(-1)^{k } \frac{e_{N,  k}}{k + 1} (p + s)^{k + 1}+ \sum_{k=0}^{2N-2}(-1)^{k + 1} \frac{ e_{N,  k}}{(k + 1) (2 N - 1 - k)}((k+1)p^{2N-k-1}+(2N-k-1)s^{k+1}).
  \end{split}
  \end{equation*}
  
We notice that
$$
\int_{0}^{Z}(z^2+(p+s)z+1)^{N-1}dz=\sum_{i=0}^{2N-2}\frac{e_{N,k}}{k+1}Z^{k+1}
$$
and 
$$e_{N,2N-2-j}=e_{j},\,\,\,\,\,\, j=0,1,\cdots,2N-2.$$

Now $T$ can be written as sum of definite integrals as follows: 
 \begin{equation*}
 \begin{split}
  T&=- \sum_{k=0}^{2N-2}(-1)^{k + 1} \frac{e_{N,  k}}{k + 1} (p + s)^{k + 1}+ \sum_{k=0}^{2N-2}(-1)^{k + 1} \frac{ e_{N,  k}}{(k + 1) (2 N - 1 - k)}((k+1)p^{2N-k-1}+(2N-k-1)s^{k+1})\\
  &=-\int_{0}^{-p-s}(z^2+(p+s)z+1)^{N-1}dz+\int_{0}^{-p}(z^2+(p+s)z+1)^{N-1}dz+\int_{0}^{-s}(z^2+(p+s)z+1)^{N-1}dz\\
  &=-\int_{-p}^{-p-s}(z^2+(p+s)z+1)^{N-1}dz+\int_{0}^{-s}(z^2+(p+s)z+1)^{N-1}dz\\
  &=-\int_{0}^{-s}(z^2+(-p+s)z+1-ps)^N-(z^2+(p+s)z+1)^N dz.
  \end{split}
  \end{equation*}

Finally it remains  to show that 
$$
\int_{0}^{-s}(z^2+(-p+s)z+1-ps)^N-(z^2+(p+s)z+1)^N dz =0.
$$
By change of  the variable $z=t-s/2$, the above becomes
$$
\int_{s/2}^{-s/2} \left( (t^2 - p t + A)^N - (t^2 + pt + A)^N \right)dt
$$
where $A=1-ps/2 - s^2/4$. 
Further the integrand is simplified as 
$$
-2pt \sum_{i+j=N-1} (t^2 -pt +A)^i (t^2+pt +A)^j.
$$
It is easily checked to be an odd function in $t$. 
Thus the vanishing of $T$ is obtained and this completes the proof. 
\end{proof}
Now simply applying the above Lemma to Siegel's formula, we 
finish the proof of Thm.\ref{higher_Meyer_formula} in the following:
%
%
\begin{equation*}
\begin{split}
&\frac{(-1)^N2N}{B_{2N}}\zeta(\fa,1-N)\\
&=\frac{-1}{q^{2N-1}} \sum_{k=0}^{2N-2}(-1)^{k + 1} \frac{e_{N,  k}}{k + 1} (p + s)^{k + 1}\\
&\quad+\frac{1}{q^{2N-1}}\frac{2N}{B_{2N}} \sum_{k=0}^{2N-2}(-1)^{k + 1}  \frac{e_{N,  k}}{(k + 1) (2 N - 1 - k)}\left(q s^I_{k + 1,  2 N - k - 1}(p,q)+ s^R_{k + 1,  2 N - k - 1}(p,q)\right)\\
&=\frac{-1}{q^{2N-1}} \sum_{k=0}^{2N-2}(-1)^{k + 1} \frac{e_{N,  k}}{k + 1} (p + s)^{k + 1}+\frac{1}{q^{2N-1}}\frac{2N}{B_{2N}} \sum_{k=0}^{2N-2}(-1)^{k + 1}  \frac{e_{N,  k}}{(k + 1) (2 N - 1 - k)} s^R_{k + 1,  2 N - k - 1}(p,q)\\
&\quad+\frac{(-1)^N}{q^{2N-2}} \frac{2N}{B_{2N}} \sum_{k=0}^{2N-2}(-1)^{k + 1} e_{N,  k} 
\frac{s^{I}_{k + 1,  2 N - k - 1}(p,q)}{(k + 1) (2 N - 1 - k)}\\
&=\frac{(-1)^N}{q^{2N-2}} \frac{2N}{B_{2N}} \sum_{k=0}^{2N-2}(-1)^{k + 1} e_{N,  k} \frac{s^{I}_{k + 1,  2 N - k - 1}(p,q)}{(k + 1) (2 N - 1 - k)}
\end{split}
\end{equation*}

Next comes explicit formula of the partial zeta values for $s=-1$ and $s=-2$  using integral part of Dedekind sum and obtain explicit expression for them using continued fraction of $\frac{p}{q}.$
\begin{exam}
\begin{equation*}
\begin{split}
&\zeta(\fa,-1)=-\frac{1}{3q^3} s_{1,3}^I(p,q)+\frac{(p+s)}{4q^3}s_{2,2}^I(p,q)-\frac{1}{3q^3}s_{3,1}^I(p,q)\\
&\zeta(\fa,-2)\\
&=\frac{1}{5q^5}s_{1,5}^I(p,q)-\frac{1}{4q^5}(p+s)s_{2,4}^I(p,q)+\frac{1}{9q^5}(p+s)^2s_{3,3}^I(p,q)-\frac{1}{4q^5}(p+s)s_{4,2}^I(p,q)+\frac{1}{5q^5}s_{5,1}^I(p,q).
\end{split}
\end{equation*}

Below is  the table of explicit formula of $s_{i,j}^I(p,q)$  for $i+j=4$ and $ 6$, respectively.
\begin{center}
    \begin{tabular}{| l |  l  |}
      \hline
 $s_{2,2}^I(p,q)$&$\frac{1}{36}\sum_{k=-1}^{n-1}(-1)^{k+1} \Big{(}D_{k+1}q_{k}+D_k q_{k+1}\Big{)}$\\
 $$&$-\frac{1}{180}\sum_{k=0}^{n-1}(-1)^ka_{k+1}\Big{(}q_{k+1}(D_{k-1}+2D_{k+1})+q_{k-1}(2D_{k-1}+D_{k+1})\Big{)}$\\
  \hline
 $ s_{3,1}^I(p,q)$&$\frac{1}{24}\sum_{k=-1}^{n-1}(-1)^{k} q_kq_{k+1}$\\
 $$&$-\frac{1}{120}\sum_{k=0}^{n-1}(-1)^ka_{k+1}\Big{(}q_{k+1}^2+q_{k-1}q_{k+1}+q_{k-1}^2\Big{)}$\\
  \hline
      $ s_{1,3}^I(p,q)$&$\frac{1}{24}\sum_{k=-1}^{n-1}(-1)^{k} D_kD_{k+1}$\\
      $$&$-\frac{1}{120}\sum_{k=0}^{n-1}(-1)^ka_{k+1}\Big{(}D_{k+1}^2+D_{k-1}D_{k+1}+D_{k-1}^2\Big{)}$\\
\hline
\end{tabular}
\end{center}

\begin{center}
    \begin{tabular}{| l |  l  |}
      \hline
$ s_{5,1}^I(p,q)$&$\frac{1}{72}\sum_{k=-1}^{n-1}(-1)^{k+1} \Big{(}q_{k+1}q_{k}^3+q_k q_{k+1}^3\Big{)}$\\
$$&$+\frac{1}{252}\sum_{k=0}^{n-1}(-1)^{k}a_{k+1}\Big{(}q_{k-1}^4+q_{k-1}^3q_{k+1}+q_{k-1}^2q_{k+1}^2+q_{k-1}q_{k+1}^3+q_{k+1}^4\Big{)}$\\
\hline
$ s_{4,2}^I(p,q)$&$\frac{1}{180}\sum_{k=-1}^{n-1}(-1)^{k+1} \Big{(}D_{1 + k} q_k^3 + 3 D_k q_k^2 q_{1 + k} + 
 3 D_{1 + k} q_{k} q_{1 + k}^2 + D_{k} q_{1 + k}^3\Big{)}$\\
 $$&$+\frac{1}{630}\sum_{k=0}^{n-1}(-1)^{k}a_{k+1}\Big{(}4 D_{k-1} q_{ k-1}^3 + D_ {k+1} q_{k-1}^3 + 
 3 D_{ k-1} q_{k-1}^2 q_{k+1} + 
 2 D_{k+1} q_{k-1}^2q_{k+1}$\\
 $$&$+2 D_{k-1} q_{k-1} q_{k+1}^2 + 
 3 D_{k+1} q_{k-1} q_{k+1}^2 + D_{k-1} q_{k+1}^3 + 
 4 D_{k+1} q_{k+1}^3\Big{)}$\\
\hline
$ s_{3,3}^I(p,q)$&$\frac{1}{80}\sum_{k=-1}^{n-1}(-1)^{k+1} \Big{(}D_kD_{1+k}q_k^2+D_k^2q_kq_{k+1}+D_{k+1}^2q_kq_{1+k}+D_kD_{1+k}q_{1+k}^2\Big{)}+$\\
$$&$\frac{1}{840}\sum_{k=0}^{n-1}(-1)^{k}a_{k+1}\Big{(}6D_{k-1}^2q_{k-1}^2+3D_{k-1}D_{k+1}q_{k-1}^2+D_{k+1}^2q_{k-1}^2+3D_{k-1}^2q_{k-1}q_{1+k}$\\
$$&$+4D_{k-1}D_{k+1}q_{k-1}q_{k+1}+3D_{k+1}^2q_{k-1}q_{k+1}+D_{k-1}^2q_{k+1}^2+3D_{k-1}D_{k+1}q_{k+1}^2+6D_{1+k}^2q_{1+k}^2
\Big{)}$\\
\hline
$s_{2,4}^I(p,q)$&$\frac{1}{180}\sum_{k=-1}^{n-1}(-1)^{k+1} \Big{(}3D_k^2D_{k+1}q_k+D_{k+1}^3q_k+D_k^3q_{1+k}+3D_kD_{1+k}^2q_{1+k}\Big{)}$\\
$$&$+\frac{1}{630}\sum_{k=0}^{n-1}(-1)^ka_{k+1}\Big{(}4D_{k-1}^3q_{k-1}+3D_{k-1}^2D_{k+1}q_{k-1}+2D_{k-1}D_{k+1}^2q_{k-1}+D_{k+1}^3q_{k-1}$\\
$$&$+D_{k-1}^3q_{k+1}+2D_{k-1}^2D_{k+1}q_{k+1}+3D_{k-1}D_{k+1}^2q_{k+1}+4D_{k+1}^3q_{k+1}\Big{)}$\\
\hline
$s_{1,5}^I(p,q)$&$\frac{q}{72}\sum_{k=-1}^{n-1}(-1)^{k+1} \Big{(}D_{k+1}D_{k}^3+D_k D_{k+1}^3\Big{)}$\\
$$&$+\frac{q}{252}\sum_{k=0}^{n-1}(-1)^ka_{k+1}\Big{(}D_{k-1}^4+D_{k-1}^3D_{k+1}+D_{k-1}^2D_{k+1}^2+D_{k-1}D_{k+1}^3+D_{k+1}^4\Big{)}$\\
\hline
\end{tabular}
\end{center}

\end{exam}

\section{Application 2: equidistribution of the graph of Generalized Dedekind sum}\label{last_sec}

In this section, we give a proof for Thm.\ref{ma2} which describes  the distribution of $H_{ij}(p,q)$ for $i+j$ even and a relatively prime pair of integers $(p,q)$ in $\FR^2$ given as follows:
$$
H_{ij}(p,q):=\Big{(}\frac{p}{q},R_{i+j} q^{N-2}s_{i,j}(p,q)\Big{)}
$$ 
We  will show that the fractional part of the graph $H_{ij}(p,q)$ taken in $[0,1)^2$ are 
equidistributed in the sense of Weyl(\cite{Weyl}).
As a function on the unit torus, it is equivalent to saying that the average of the point-mass supported on the sequence of the vectors $H_{ij}(p,q)$(ordered by increment of $q$) is weakly 
convergent to the standard Lebesgue measure on the torus.

A   sequence $\alpha_k$ in $[0,1)$ is said to be equidistributed if the average of the point-mass supported on the
truncated sequence weakly converges to the standard Lebesgue measure of the unit interval. 
It is checked by the behavior of the  Fourier coefficient of the average of the point-mass. 
Let $\bbe(x)=\exp(2\pi i x)$ for $x\in \FR$. 
The Fourier coefficient of the character $\bbe(mx)$ is given as
$$
\int_0^1 \bbe(m x) \frac1N \sum_{k\le N} \delta_{\alpha_k} dx = \frac{1}{N}\sum_{k\le N} \bbe(m \alpha_k).
$$
Then the sequence $\alpha_k$ is equidistributed iff $a_m(N) \to 0$ as $N\to \infty$ for each nonzero $m\in \FZ$.

For a sequence on a $n$-dimensional torus to be equidistributed, 
the criterion is changed as follows. 
We note first that a character of $n$-dimensional  is represented by a lattice vector $\bf{m}\in \FZ^n$.  
It needs that for any $\bf{m}\in \FZ^n\setminus \{0\}$,
$$
\frac1N \sum_{k\le N} \bbe(\bf{m}\cdot \alpha_k) \to 0 \quad\text{as $N\to \infty$}.
$$



Now we return to the case of our interest.
Let $\bf{m}\in \FZ^{2}$ a lattice vector and let, for a positive real number $x$, 
$E({\bf m},x)$ be the average of the exponential of $(2\pi i) {\bf m}\cdot  H_{ij}(p,q)$:
\begin{equation}
E({\bf m},x) := \frac{1}{\# \left\{(p,q)|\gcd(p,q)=1, p<q\le x \right\} } \sum_{0<q<x}\sum_{\substack{0<p<q\\(p,q)=1}} \bbe\left( {\bf m}\cdot H_{ij}\left(p,q\right)\right).
\end{equation}
For $H_{ij}(p,q) $ to be equidistributed in $T^2$, it needs to check that
$E({\bf m},x)\to 0$ as $x\to \infty$ for every nonzero $\bf{m}$.
This will be shown by relating  an exponential sum. 

Now, we relate 
\begin{equation}
\sum_{\substack{0<p<q\\(p,q)=1}} \bbe\left( {\bf m} \cdot H_{ij}\left(p,q\right)\right)
\end{equation}
to an exponential sum for a Laurent polynomial. 

 For a Laurent polynomial $F(x)\in \FZ[x,x^{-1}]$, we denote its exponential sum over $\FZ/q\FZ$ by
 $K(F,q)$:
\begin{equation}\label{G_ES}
K(F,q):=
\sum_{x\in (\FZ/q\FZ)^*}\bbe_q\left(F(x)\right),
\end{equation}
where $\bbe_q(x):=\bbe(\frac{x}{q})= \exp(2\pi i \frac{x}q)$.


For $H_{ij}(p,q)$, we recall the formula for the fractional part of $s_{ij}(p,q)$.
\begin{thm}[Jun-Lee\cite{J-L3}]\label{general}
For positive integers $i,j$ with $i+j=N\geq 2$ even, we have
$$
R_N q^{N-2}s_{ij}(p,q)  - \alpha_N r_N \frac{p'^{i}\binom{N-1}{i} +p^{j}\binom{N-1}{j}}{q} \in \FZ,
$$
where $p'$ is an integer such that $p'p \equiv 1 \pmod{q}$.
\end{thm}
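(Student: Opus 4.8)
The plan is to derive the congruence directly from the main formula of this paper, Theorem~\ref{ma}, whose whole purpose is to split $q^{N-2}s_{ij}(p,q)$ into an integral part $s^I_{ij}$ and an explicit rational part $s^R_{ij}$. First I would rewrite Theorem~\ref{ma} with $(e,f)=(i,j)$ as
\begin{equation*}
q^{N-2}s_{ij}(p,q) = s^I_{ij}(p,q) + \frac{1}{q}s^R_{ij}(p,q) - q^{N-2}\delta(i,j)B_iB_j,
\end{equation*}
and multiply through by $R_N$. Since $s^I_{ij}(p,q)\in\frac{1}{R_N}\FZ$ by the integrality recorded in the introduction, the term $R_N s^I_{ij}(p,q)$ is an integer and vanishes modulo $\FZ$. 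Next I would dispose of the correction term: for $N\ge 4$ one of $B_i,B_j$ is a Bernoulli number of odd index $\ge 3$ (because $i+j=N$ is even with one index equal to $1$), hence zero, so $\delta(i,j)B_iB_j=0$; the only surviving case is $N=2$, $i=j=1$, where $R_2=12$ and $R_2 q^{0}\,\delta(1,1)\,B_1B_1 = 12\cdot\frac14 = 3\in\FZ$. Thus $R_N q^{N-2}\delta(i,j)B_iB_j\in\FZ$ in every case and is also killed modulo $\FZ$. This reduces the whole statement to analyzing $\frac{R_N}{q}s^R_{ij}(p,q)$ modulo $\FZ$.

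For that term I would substitute the explicit value of $s^R_{ij}(p,q)$ from Theorem~\ref{ma}. Using $i!j!/N!=1/\binom{N}{i}$ together with $R_N=\binom{N}{i}\beta_N r_N$ and $\beta_N B_N=\alpha_N$, the binomial coefficients $\binom{N}{i}$ cancel and one finds
\begin{equation*}
\frac{R_N}{q}s^R_{ij}(p,q) = \frac{\alpha_N r_N}{q}\left[\binom{N-1}{j}p^{j} + (-1)^{i(n-1)}\binom{N-1}{i}(q_{n-1})^{i}\right].
\end{equation*}
Comparing with the claimed expression, it now remains only to show that replacing $(-1)^{i(n-1)}(q_{n-1})^i$ by $(p')^i$ alters this quantity by an integer.

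The heart of the argument is then the arithmetic identity $q_{n-1}\equiv(-1)^{n-1}p'\pmod q$. This follows from the determinant relation for convergents: from the recursion $(p_{k+1},q_{k+1})=(p_{k-1},q_{k-1})+a_{k+1}(p_k,q_k)$ with the initial data $(p_{-1},q_{-1})=(1,0)$ and $(p_0,q_0)=(0,1)$ one gets $p_kq_{k-1}-p_{k-1}q_k=(-1)^{k+1}$, so that $D_{n-1}=pq_{n-1}-qp_{n-1}=(-1)^{n-1}$; reducing modulo $q$ gives $pq_{n-1}\equiv(-1)^{n-1}$, i.e.\ $q_{n-1}\equiv(-1)^{n-1}p'\pmod q$. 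Raising to the $i$-th power yields $(-1)^{i(n-1)}(q_{n-1})^i\equiv(p')^i\pmod q$, so the difference $(-1)^{i(n-1)}(q_{n-1})^i-(p')^i$ is a multiple of $q$; multiplying it by $\frac{\alpha_N r_N}{q}\binom{N-1}{i}$ produces an integer. Combining the three reductions gives
\begin{equation*}
R_N q^{N-2}s_{ij}(p,q) - \frac{\alpha_N r_N}{q}\left[\binom{N-1}{i}(p')^i + \binom{N-1}{j}p^{j}\right]\in\FZ,
\end{equation*}
which is the assertion.

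The main obstacle I anticipate is the bookkeeping of sign conventions: one must pin down that $D_{n-1}=(-1)^{n-1}$ (and not $(-1)^n$) under the paper's indexing of the convergents, since this parity feeds directly into the exponent $(-1)^{i(n-1)}$, and the wrong sign would leave a spurious factor $(-1)^i$ obstructing the match with $(p')^i$. By contrast, the Bernoulli-number case analysis at $N=2$ and the appeal to $R_N s^I_{ij}\in\FZ$ are routine once this sign is correctly fixed.
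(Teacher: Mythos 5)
The paper does not actually prove this statement: Theorem~\ref{general} is imported verbatim from \cite{J-L3} and used as an input in Section~7, so there is no internal proof to compare yours against. Judged on its own terms, your algebraic skeleton is sound. The simplification $i!j!/N!=1/\binom{N}{i}$ giving $\frac{R_N}{q}s^R_{ij}(p,q)=\frac{\alpha_N r_N}{q}\bigl[\binom{N-1}{j}p^{j}+(-1)^{i(n-1)}\binom{N-1}{i}q_{n-1}^{i}\bigr]$ is correct; the disposal of $\delta(i,j)B_iB_j$ (zero for $N\ge 4$ since the odd-index Bernoulli number vanishes, and $12B_1^2=3\in\FZ$ for $N=2$) is correct; and the congruence $q_{n-1}\equiv(-1)^{n-1}p'\pmod q$ is correct, with the sign pinned down exactly as you feared it needed to be --- indeed the paper itself corroborates it in the proof of Lemma~\ref{rational} via $s=(-1)^{n-1}q_{n-1}$ and $ps\equiv 1\pmod q$.

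The genuine gap is the very first reduction: ``$R_N s^I_{ij}(p,q)\in\FZ$ by the integrality recorded in the introduction.'' That integrality is asserted in the introduction but proved nowhere in this paper, and it is not a formal consequence of Theorem~\ref{ma}: after collecting factorials, $R_N s^I_{ij}$ is a sum of terms of the shape $\binom{N}{e}\beta_N r_N\binom{e}{\alpha}\binom{f}{i-\alpha}\frac{B_{i+1}B_{N-i-1}}{(i+1)(N-i-1)}\times(\text{integers})$, and showing these are integers is a von Staudt--Clausen style denominator analysis of products of Bernoulli numbers --- precisely the computation that motivates the peculiar definition of $r_N$ in \eqref{Rij} --- which you do not carry out. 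Worse, inside this paper the only alternative source for $R_N s^I_{ij}\in\FZ$ is Theorem~\ref{general} itself (rearranging Theorem~\ref{ma} gives $R_N s^I_{ij}=R_Nq^{N-2}(s_{ij}+\delta B_iB_j)-R_N s^R_{ij}/q$, whose integrality is exactly the content of the statement you are proving), so your argument is circular unless the denominator computation is supplied independently. The proof in \cite{J-L3} runs in the opposite direction: it determines the fractional part of $R_Nq^{N-2}s_{ij}(p,q)$ directly from the defining sum, and the integrality of the complementary part is a corollary. To make your route work, you must prove $R_N s^I_{ij}(p,q)\in\FZ$ from the explicit formula in Theorem~\ref{ma}; everything else in your write-up is fine.
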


Thus one can relate the exponential sum below to ${\bf m}\cdot H_{ij}(p,q)$.
$$
\sum_{\substack{0<p<q\\(p,q)=1}} \bbe\left( {\bf m}\cdot  H_{ij}\left(p,q\right)\right)=K({\bf m}\cdot F_{ij},q).
$$
where  $F_{ij}(x)$ is the rank-$2$ vector of  Laurent polynomials
$$
F_{ij}(x)=\left(x, x^{-i}\alpha_Nr_N\binom{N-1}{i}+x^{j}\alpha_Nr_N\binom{N-1}{j}\right),
$$
and  for ${\bf m}=(m_1,m_2)\in\FZ^2\setminus\{ 0\}$,
\begin{equation}\label{Fij}
{\bf m}\cdot F_{ij}(x)= \sum_{i=1}^{N-1}m_1x+m_2 x^{-i}\alpha_Nr_N\binom{N-1}{i}+m_2 x^{j}\alpha_Nr_N\binom{N-1}{j}.
\end{equation}

To estimate the exponential sum $K({\bf m}\cdot F_{ij},q)$,
we recall a result from SGA 4${}^1{\mskip -5mu/\mskip -3mu}_2$(\cite{SGA4.5}).
Consider an exponential sum of a Laurent polynomial $f(x)$ over $\FP^1(\FF_p)$ for a prime $p$:
$$
S'_f:= \sum_{\substack{x\in \FP^1(\FF_p)\\ f(x) \ne\infty}} \bbe_p(f(x))
$$
The irregularity index $v^*_x(f)$  at $x$ is actually the Swan conductor of the associated $\ell$-adic sheaf to $f$.
It is a local invariant defined in the following way:
\begin{enumerate}[(a)]
\item
$ 
v_x(f) =  \begin{cases} \text{the order of pole at $x$  of $f$ if $f(x)=\infty$} \\
0,\quad\text{otherwise}
\end{cases} 
$ 
\item
$v_x^*(f) : = \inf_g  v_x(f+ g^p-g)$
\end{enumerate}
A modified exponential sum is defined from $S'_f$ as
$$
S_f := \sum_{\substack{x\in\FP^1(\FF_p) \\ v^*_x(f) = 0}} \bbe_p\left(f\left(x\right)\right)
$$
where 
the value of $\bbe_p\left(f(x)\right)$ is extended to $\{x|v^*_x(f)=0\}$ by putting
$$
\bbe_p\left(f\left(x\right)\right) := \bbe_p\left( \left(f + g^p -g\right)\left(x\right) \right)
$$ 
for $g$ which makes 
$f + g^p -g$ regular at $x$.

%
%
Applying the formula (3.5.2) of \textit{loc.cit.}, we obtain a Weil type bound of $S_f$:
\begin{equation}
|S_f| \le -2 + \sum_{v^*_x(f)\ne 0} (1 + v_x^*(f)) p^{1/2}
\end{equation}


In our case, $f(x)= {\bf m}\cdot F_{ij}(x)$ explicitly given in  \eqref{Fij}. 
$$
v^*_{0}(f) = 
\begin{cases}
0 & \text{if $m_2 =0$}  \\
i &  \text{Otherwise}
\end{cases}
\quad\text{and}\quad
v^*_{\infty}(f) = 
\begin{cases}
j & \text{if $j>1$ and $m_2\ne 0$} \\
1 & \text{if $m_2 =0$}\\
0 & \text{if $j=1$ and $m_1 + m_2 = 0$}
\end{cases}
$$
%
For any nonzero ${\bf m}$, 
we have an upper bound for the exponential sum:
\begin{equation}
\left| K({\bf m}\cdot F_{ij}, p)\right| \le C \cdot p^{1/2}
\end{equation}
for a given prime $p$ and $C$ is a constant independent of $p$.

Now, we consider the case that $q$ is a power of prime $p$ or product of several prime powers. 
Using the same reduction process  in Section 6 of \cite{J-L3}, we  obtain the following bound for 
given positive integer $q$.

\begin{thm}
For given positive integer $i,j$ such that  $N=i+j$ is even and positive integer $q$, we have
$$
\left|K({\bf m}\cdot F_{ij}, q)\right|<< q^{\frac{3}{4}+\epsilon}
$$
for all $\epsilon>0$.
\end{thm}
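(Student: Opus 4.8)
The plan is to pass from a prime modulus, where the Weil-type bound just established applies, to a general modulus $q$ by multiplicativity together with a separate treatment of prime-power moduli, following the reduction of Section 6 of \cite{J-L3}.

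First I would use the Chinese Remainder Theorem to factor the exponential sum. Writing $q=\prod_{p^a\|q}p^a$ and letting $c_{p}=q/p^{a}$ together with its inverse $\bar c_{p}$ modulo $p^{a}$, the substitution $x\mapsto \bar c_p x$ gives
\begin{equation*}
K({\bf m}\cdot F_{ij},q)=\prod_{p^a\|q}K(\bar c_p\,{\bf m}\cdot F_{ij},\,p^a).
\end{equation*}
The twist by the unit $\bar c_p$ rescales the coefficients of ${\bf m}\cdot F_{ij}$ but leaves its shape intact: the pole at $0$ still has order $i$ and the pole at $\infty$ still has order $j$. Thus it suffices to bound each local factor $K(\bar c_p\,{\bf m}\cdot F_{ij},p^a)$.

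For the exact prime factors $a=1$ this is precisely the Weil-type bound established above via \cite{SGA4.5}, namely $|K(\cdot,p)|\le C\,p^{1/2}\le C\,p^{3/4}$ with $C$ independent of $p$. For the prime-power factors $a\ge 2$ the \'etale-cohomological input no longer applies directly, and I would instead invoke the $p$-adic stationary-phase estimate used in \cite{J-L3}: decompose the sum over $x\bmod p^{a}$ according to residues modulo $p^{\lceil a/2\rceil}$, Taylor-expand ${\bf m}\cdot F_{ij}$ around each residue, and reduce to counting the critical points of ${\bf m}\cdot F_{ij}$ modulo $p$ weighted by their multiplicities. For all but the finitely many primes dividing a fixed discriminant determined by $i,j$ and ${\bf m}$ the critical points are simple and one recovers the clean $p^{a/2}$; at the remaining degenerate primes the critical points may collide, and the multiplicity bookkeeping yields the worst-case exponent $\tfrac34$, i.e. $|K(\cdot,p^a)|\le C'\,p^{3a/4}$.

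Finally I would assemble the local bounds. Multiplying over all $p^a\|q$ gives
\begin{equation*}
|K({\bf m}\cdot F_{ij},q)|\le \prod_{p^a\|q}C''\,p^{3a/4}=(C'')^{\omega(q)}\,q^{3/4},
\end{equation*}
and since $\omega(q)\ll \log q/\log\log q$ the factor $(C'')^{\omega(q)}$ is $q^{o(1)}$, hence absorbed into $q^{\epsilon}$, yielding $|K({\bf m}\cdot F_{ij},q)|\ll q^{3/4+\epsilon}$. The \emph{main obstacle} is the prime-power estimate of the previous paragraph: controlling the degenerate primes, where the critical points of ${\bf m}\cdot F_{ij}$ merge modulo $p$ and the honest $p^{a/2}$ is lost, requires the full multiplicity analysis of the stationary-phase method and is exactly what pins the exponent at $\tfrac34$ rather than $\tfrac12$.
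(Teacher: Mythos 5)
Your outline---CRT factorization into prime-power local factors, the Weil/Deligne bound at the exact prime divisors, and $p$-adic stationary phase at higher prime powers with degenerate critical points forcing the exponent up to $3/4$---is essentially the same route as the paper, whose entire proof of this theorem is a citation to the reduction process of Section 6 of \cite{J-L3}, where precisely this argument is carried out. The only part you assert rather than prove (the multiplicity bookkeeping at degenerate primes that pins the prime-power bound at $p^{3a/4}$) is likewise left to \cite{J-L3} by the paper itself.
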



Finally the Weyl criterion for $H_{ij}(p,q)$ comes from the following estimate
\begin{equation}
\begin{split}
\sum_{0<q<x}\sum_{\substack{0<p<q\\(p,q)=1}} 
\bbe\left({\bf m}\cdot  H_{\bf ij}(p,q) \right)= \sum_{0<q<x} K({\bf m}\cdot F_{ij},q) 
\leq x^{\frac{7}{4}+\epsilon}. 
\end{split}
\end{equation}

For $x>1$ let $\phi(x):=\left|\left(\FZ/[x]\FZ\right)^*\right|$ be the Euler's phi function. 
Since 
$
\sum_{q<x} \phi(q)\sim x^2,
$
the criterion is fulfilled for
the fractional part of the vector 
$H_N(p,q)$
\begin{equation*}
\begin{split}
&E(m,x)=  \frac{1}{\# \left\{(p,q)|(p,q)=1, 0< p<q\le x \right\} } \sum_{0<q<x}\sum_{\substack{0<p<q\\(p,q)=1}} \bbe
(m\cdot R_N H_{ij}(p,q))
\rightarrow0,
\end{split}
\end{equation*}
as $x\rightarrow \infty$.
Therefore  the proof is done.

\end{document}